\begin{document}

\ensubject{fdsfd}

\ArticleType{ARTICLES}
\Year{2021}
\Month{November}%
\Vol{60}
\No{1}
\BeginPage{1} %
\DOI{....}
\ReceiveDate{June 22, 2021}
\AcceptDate{November 29, 2021}

\title[]{Further study on Horozov-Iliev's method of estimating the number of limit cycles }
{The number of zeros of Melnikov function}

\author[1]{Xiaoyan Chen}{{xychen@hnu.edu.cn}}
\author[2,$\ast$]{Maoan Han}{{mahan@shnu.edu.cn}}

\AuthorMark{Xiaoyan Chen}

\AuthorCitation{Author A, Author B, Author C}

\address[1]{Department of Computer Science and Mathematics, Changsha University, Changsha, {\rm410022}, China}
\address[2]{Department of Mathematics, Zhejiang Normal
University, Zhejiang, {\rm321004}, China}

\abstract{In the study of the number of limit cycles of near-Hamiltonian systems, the first order Melnikov function plays an important role. This paper aims to establish a development of a known method to estimate the upper bound of the number of zeros of the function.}

\keywords{near-Hamiltonian system, piecewise smooth system, Melnikov function, limit cycle}

\MSC{34C05, 34C07, 37G15}

\maketitle

\section{Introduction}

Consider the planar differential system
 \begin{equation}\label{PQ1}
 \left\{ \!\!\!
\begin{array}{ll}
 \dot{x}=P(x,y),\\
\dot{y}=Q(x,y),\\
\end{array}
\right.
 \end{equation}
where $P(x,y)$ and $Q(x,y)$ are real polynomials in the variables $x$ and $y$ of degree less or equal to $n$. As we know, the determination of the number and positions of limit cycles for system \eqref{PQ1} is the second part of Hilbert's 16th problem \cite{Hilbert1902}. This problem is difficult, and is still open even for quadratic systems. In 1983, Arnold posed weak Hilbert's 16th problem which asks the maximum number of zeros of the Abelian integral
$$ I(h)=\oint_{H=h} gdx-fdy,$$
 where $H$, $f$, and $g$ are all real polynomial functions of $x$ and $y$. This problem has been studied by many researchers (see \cite{Hanbook,Han2012,Li2000} and references there in ).

 In this paper, we consider a system of the form
  \begin{equation}\label{I1}
\left\{ \!\!\!
\begin{array}{ll}
\dot{x}=\frac{\partial H(x,y)}{\partial y}+\epsilon p(x,y),\\
\dot{y}=-\frac{\partial H(x,y)}{\partial x}+\epsilon q(x,y),\\
\end{array}
\right.
\end{equation}
where $\epsilon \in \mathbb{R}$ is a small parameter, $H(x,y)$, $p(x,y)$, and $q(x,y)$ are smooth functions of $x$ and $y$. Suppose that system $\eqref{I1}|_{\epsilon=0}$ has a family of periodic orbits $L_h$ defined by $H(x,y)=h$ for $h\in (\alpha, \beta)$.

 As we know, the first order Melnikov function of system \eqref{I1} has the following form
 $$
 M(h)=\int_{L_h}q(x,y)dx-p(x,y)dy,\quad h\in (\alpha, \beta),
$$
see \cite{Hanbook}.
In 2010, the authors in \cite{Liu2010} introduced the first order Melnikov function and established its formula when \eqref{I1} is a piecewise smooth system. Recently, the formula of the first order Melnikov function $M(h)$ was extended to piecewise smooth near-Hamiltonian or near-integrable systems with multiple switching lines or curves, see \cite{Tian2021,Wang2016}. As we know, the total number of zeros of $M(h)$ can control the number of limit cycles bifurcating from a period annulus. More precisely, Han and Yang in \cite{Han2021} showed that if $M(h)$ has at most $k$ zeros in $h\in(\alpha,\beta)$, multiplicity taken into account, then for small $|\epsilon|>0$, system \eqref{I1} has at most $k$ limit cycles bifurcating from the period annulus defined by $L_h$, multiplicity taken into account. One can find in \cite{Hanbook,Han2015,Han2021,Liu2010} more information on the relationship between the number of zeros of $M(h)$ and the number of limit cycles.

The main aim of this paper is to establish a development of a known method to estimate upper bound of the maximum number of zeros of $M(h)$ in $h$ on $(\alpha,\beta)$. Up to now, there have been certain classical and fundamental approaches on this aspect. Let us briefly introduce some of them as follows.

(i) Horozov-Iliev's method. Suppose that there exist functions $G$ and $F$ defined on $(\alpha,\beta)$ such that
\begin{equation}\label{HIM1}
 \left(\frac{M(h)}{G(h)}\right)'=\frac{F(h)}{G^2(h)},\quad h\in (\alpha,\beta)\setminus \mathbb{S},
 \end{equation}
where $\mathbb{S}$ is the set of the zeros of $G(h)$ on $(\alpha,\beta)$. Then, we have
\begin{equation}\label{HIE1}
\lambda\leq \mu+p+1,
\end{equation}
where $\lambda$, $\mu$, and $p$ are the number of zeros of $M(h)$, $F(h)$, and $G(h)$ on $(\alpha,\beta)$, respectively (see Lemma 4.2 in \cite{Horozov1998}). Obviously, \eqref{HIM1} provides a method to estimate the upper bound of the number of zeros of $M(h)$ on $(\alpha,\beta)$. Since it was firstly obtained by Horozov and Iliev in 1998, we call it Horozov-Iliev's method. In fact, this method has been widely applied by researchers in recent years, see \cite{Liang2012,Xiong2012,Yang2018,Zhao1999} and references therein.

(ii) Chebyshev criterion. Let $(f_1,f_2,\ldots, f_n)$ be an ordered set of $C^{\infty}$ functions on the interval $(\alpha,\beta)$. The ordered set $(f_1,f_2,\ldots,f_n)$ is called to be an extended complete Chebyshev system (in short ECT-system) on $(\alpha,\beta)$ if for all $1\leq k\leq n$, any nontrivial linear combination $\sum\limits_{i=1}^ka_if_i(h)=0$ has at most $k-1$ isolated zeros on $(\alpha,\beta)$ counted with multiplicities. Chebyshev criterion says that the ordered set $(f_1,f_2,\ldots,f_n)$ is an ECT-system if and only if for each $k=1,2,\ldots,n,$
$$ W_k(h)=\left |\begin{matrix}
 f_1(h) & f_2(h)& \cdots &  f_{k}(h) \\
f'_1(h)& f'_2(h)&\cdots &  f'_{k}(h)\\
\vdots&\vdots&\cdots&\vdots\\
f^{(k-1)}_1(h)& f^{(k-1)}_2(h)&\cdots &  f^{(k-1)}_{k}(h)\\
\end{matrix}\right |\neq 0, \quad h\in (\alpha,\beta),$$
see \cite{Karlin1966}. Chebyshev theory was also widely used and extended by many researchers, see \cite{Cen2018,Gasull2012,Manosas2011,Douglas2017,Yang20201,Zalik2011} and references therein.

(iii) Petrov's method or method of argument principle. This method was firstly used by G. S. Petrov to study the perturbations of elliptic Hamiltonian of degree $3$ and degree $4$ in 1980's (see \cite{Petrov1984,Petrov1986,Petrov1987,Petrov1988}). The main idea of this method is to extend $h$ to a complex number and estimates the number of zeros of $M(h)$ by argument principle in a suitable domain, see \cite{Iliev2003,Li2015,Yang2017} and references therein.

In \cite{Wang2016}, the authors considered a piecewise smooth near-Hamiltonian system of the following form
\begin{equation}\label{WHs}
\left (
\begin{array}{ll}
\dot{x}\\
\dot{y}\\
\end{array}
\right )= \left (
\begin{array}{ll}
\frac{\partial H_k(x,y)}{\partial y}+\epsilon p_k(x,y)\\
-\frac{\partial H_k(x,y)}{\partial x}+\epsilon q_k(x,y)\\
\end{array}
\right ),
\quad (x,y)\in D_k,\quad k=1,2,3,4,\\
\end{equation}
where
\begin{equation}\label{pqk}
p_k(x,y)=\sum\limits_{i+j=0}^{n}a_{kij}x^iy^j,\quad q_k(x,y)=\sum\limits_{i+j=0}^{n}b_{kij}x^iy^j ,
\end{equation}
$$D_1=\{(x,y)|x>0,y>0\}, \quad D_2=\{(x,y)|x<0,y>0\},$$
$$D_3=\{(x,y)|x<0,y<0\}, \quad D_4=\{(x,y)|x>0,y<0\},$$
$\epsilon $ is small enough, and $H_k\in C^{\infty}$. Specially, the authors in \cite{Wang2016} studied system \eqref{WHs} for the following four cases.
\begin{description}
\item Case $(1)$: $H_1(x,y)$ is given by
\begin{equation}\label{H1}
H_1(x,y)=\lambda_1(x-1)(y-1),\quad (x,y)\in D_1,
\end{equation}
and $H_k(x,y)$, $k=2,3,4$ are given by
\begin{equation}\label{Hk}
H_k(x,y)=-\frac{w_k}{2}(x^2+y^2),\quad (x,y)\in D_k,
\end{equation}
where $ \lambda_1>0$ and $w_k>0$.
\item Case $(2)$: $H_1(x,y)$ is given by \eqref{H1}, $H_k(x,y)$, $k=3,4$ are given by \eqref{Hk}, and
\begin{equation}\label{H2}
H_2(x,y)=-\lambda_2(x+1)(y-1),\quad (x,y)\in D_2,
\end{equation}
where $\lambda_2>0$.
\item Case $(3)$: $H_i(x,y)$, $i=1,2,4$ are given by \eqref{H1}, \eqref{H2}, \eqref{Hk}, respectively, and
\begin{equation}\label{H3}
H_3(x,y)=\lambda_3(x+1)(y+1),\quad (x,y)\in D_3,
\end{equation}
where $\lambda_3>0$.
\item Case $(4)$: $H_i(x,y)$, $i=1,2,3$ are given by \eqref{H1}, \eqref{H2}, \eqref{H3}, respectively, and
\begin{equation}\label{H4}
H_4(x,y)=-\lambda_4(x-1)(y+1),\quad (x,y)\in D_4,
\end{equation}
where $\lambda_4>0$.
\end{description}

The authors in \cite{Wang2016} obtained that for each one of the above cases system \eqref{WHs} has a family of periodic orbits given by $L_{h}=\cup_{k=1}^4L_{h}^k$ for $h\in (0,1)$,
where
$$L_{h}^k=\{(x,y)|H_k(x,y)=-\frac{w_k}{2}(1-h)^2,(x,y)\in D_k\}$$
for $H_k(x,y)$ defined by \eqref{Hk} and
$$L_{h}^k=\{(x,y)|H_k(x,y)=\lambda_k h,(x,y)\in D_k\}$$
for $H_k(x,y)$ defined by \eqref{H1}, \eqref{H2}-\eqref{H4}. Clearly,
 if $h\rightarrow 1 $, $L_h$ approaches the origin which is a center for cases $(1)$-$(3)$ and a generalized singular point for case $(4)$. If $h\rightarrow 0$,
 $L_h$ approaches a compound homoclinic loop $L_0$ with a saddle $(1,1)$ for case $(1)$ and a compound $k$-polycycle $L_0$ for case $(k)$, $k=2,3,4$. For the definition of a compound homoclinic loop or $k$-polycycle, see \cite{Wang2016}.

Denote by $Z_k(n)$ the maximum number of zeros of the first order Melnikov function of system \eqref{WHs} for case $(k)$ on the open interval $(0,1)$ for all possible $p_i$ and $q_i$ satisfying \eqref{pqk}, $i,k=1,2,3,4$. For case $(k)$, denote by $N_{Hopf}^k(n)$ the maximal number of limit cycles produced in Hopf bifurcation near the origin for all possible $p_i$ and $q_i$ satisfying \eqref{pqk}, $i=1,2,3,4$, $k=1,2,3$. Denote by $N_{Homoc}(n)$ and by $N_{Hetec}^k(n)$ respectively the maximal number of limit cycles produced in the compound homoclinic bifurcation and the compound $k$-polycycle bifurcation for all possible $p_i$ and $q_i$ satisfying \eqref{pqk} for case $(k)$, $i=1,2,3,4$, $k=2,3,4$. Then, the main results in \cite{Wang2016} can be stated as follows.

 \begin{theorem}[{See \cite{Wang2016}}]\label{71}
Consider system $(\ref{WHs})$ with $n\geq2$. We have
\begin{description}
\item $(i)$ $N_{Homoc}(n)\geq n+2+[\frac{n+1}{2}]$.
\item $(ii)$ $N^k_{Hopf}(n)\geq n+2+[\frac{n+1}{2}]$ for $k=1,2,3$.
\item $(iii)$ $ N^{k}_{Hetec} \geq n+2+[\frac{n+1}{2}]$ for $k=2,3$, and $ N^{4}_{Hetec} \geq n+1+[\frac{n+1}{2}]$,
\item $(iv)$ $ n+2+[\frac{n+1}{2}]\leq Z_k(n)\leq n+1+2[\frac{n+1}{2}]$ for $k=1,2,3$, and $ n+1+[\frac{n+1}{2}]\leq Z_4(n)\leq n+1+2[\frac{n+1}{2}]$.
\end{description}
\end{theorem}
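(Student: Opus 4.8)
The plan is to compute the first order Melnikov function $M(h)$ explicitly for each of the four cases and then obtain the lower and upper bounds by separate arguments. First I would express $M(h)$ via the piecewise Melnikov formula of \cite{Liu2010,Wang2016}, which assembles $M(h)$ from the line integrals $\int_{L_h^k} q_k\,dx-p_k\,dy$ over the four arcs $L_h^k$, together with the algebraic factors produced by the switching on the coordinate axes. Because each $p_k,q_k$ is an arbitrary polynomial of degree at most $n$ with coefficients $a_{kij},b_{kij}$, each arc integral may be evaluated in closed form along the relevant level set: a branch of a hyperbola for the product-type Hamiltonians \eqref{H1}, \eqref{H2}, \eqref{H3}, \eqref{H4}, and a circular arc for \eqref{Hk}. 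Carrying out these integrals expresses $M(h)$ as a linear combination, with coefficients linear in $(a_{kij},b_{kij})$, of a finite list of basic functions: powers of $h$ and of $1-h$ from the circular arcs, and powers of $h$ together with a logarithm $\ln h$ (plus logarithmic terms in $1-h$ near the center) from the hyperbolic arcs. Determining this list and the dimension of its span is the bookkeeping that drives every estimate in the theorem.

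For the lower bounds (i)--(iii) I would localize $M(h)$ at the two ends of $(0,1)$. Near $h=1$, where $L_h$ contracts to the center in cases (1)--(3), $M(h)$ has an expansion in powers of $1-h$, and the number of its coefficients that can be prescribed independently by the choice of perturbation equals the Hopf cyclicity, yielding $N_{Hopf}^k(n)\ge n+2+[\frac{n+1}{2}]$. Near $h=0$, where $L_h$ tends to the compound homoclinic loop in case (1) or the $k$-polycycle in cases (2)--(4), $M(h)$ has an asymptotic expansion mixing powers of $h$ with logarithmic terms $h^j\ln h$, and counting the independent coefficients there gives $N_{Homoc}(n)$ and $N_{Hetec}^k(n)$. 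In each case the decisive step is a linear-algebra argument: one forms the linear map sending the perturbation coefficients to the leading expansion coefficients and verifies that it attains the asserted rank, so those coefficients are free; a standard bifurcation lemma then converts prescribed leading coefficients into the corresponding number of simple zeros of $M$, hence limit cycles. The value $n+2+[\frac{n+1}{2}]$ emerges as the number of admissible monomials surviving in these expansions, the $[\frac{n+1}{2}]$ reflecting a parity split between even- and odd-degree monomials integrated over the symmetric regions, while the slightly smaller bound for case (4) records the loss of one such monomial at the generalized singular point.

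For the upper bound in (iv) I would invoke Horozov-Iliev's method \eqref{HIM1}--\eqref{HIE1}. Writing $M(h)=A(h)+B(h)\ln h$ with $A,B$ elementary, I would choose $G(h)$ to be a convenient logarithm-free factor so that, after one differentiation, $\bigl(M(h)/G(h)\bigr)'=F(h)/G^2(h)$ holds with an $F$ whose logarithmic part is strictly simpler than that of $M$. Inequality \eqref{HIE1} then gives $\lambda\le\mu+p+1$, where $p$, the number of zeros of $G$, is read off directly from its elementary factors, and $\mu$, the number of zeros of $F$, is estimated either by a degree count once the logarithms have been removed or by iterating the same divide-and-differentiate step. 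Summing the increments over the required number of iterations yields the upper bound $n+1+2[\frac{n+1}{2}]$.

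The hard part will be the upper bound, specifically the control of the zero-count $\mu$ of $F$ after each Horozov-Iliev reduction. Since the saddle and hyperbolic pieces contribute genuine logarithms, a single application of \eqref{HIM1} does not turn $M$ into a polynomial; one must iterate, at every stage re-identifying $F$ and $G$ and tracking how many zeros are created or lost. The delicate point is to keep this bookkeeping tight enough that the accumulated estimate stays within $[\frac{n+1}{2}]$ of the lower bound $n+2+[\frac{n+1}{2}]$, which in practice forces a separate, careful treatment of each of the four Hamiltonians and of the two parities of $n$.
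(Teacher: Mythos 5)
You should first note a structural fact: this paper never proves Theorem \ref{71} at all; it is quoted from \cite{Wang2016}, and the only ingredients reproduced here are the Melnikov formulas \eqref{systemwM1}--\eqref{systemwM} with their coefficients declared free, together with the paper's own use of them in proving Theorem \ref{GHIR1}. Measured against those ingredients, your overall architecture (assemble $M_k$ explicitly from the four arc integrals; lower bounds from expansions at the endpoints $h=0$ and $h=1$; upper bound by removing logarithms via differentiation/division) is the natural reconstruction, and your predicted list of basic functions is close to \eqref{systemwM1} (no $\ln(1-h)$ terms actually occur, but that is minor).

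The genuine gap is in the step you treat as routine: the lower bounds. Your mechanism --- ``form the linear map from perturbation coefficients to leading expansion coefficients, verify its rank, apply a standard bifurcation lemma'' --- produces at most $(\dim-1)$ simple zeros, where $\dim$ is the dimension of the span of $M_k$. From \eqref{systemwM1}, for $k=1,2,3$ that span has dimension exactly $n+2+[\frac{n+1}{2}]$, because the block $h^i(1-h)$, $1\leq i\leq n$, is already contained in the span of the $(1-h)^{i+1}$, $0\leq i\leq n+1$; so rank counting tops out at $n+1+[\frac{n+1}{2}]$, one short of every bound claimed in $(i)$, $(ii)$ and the $k=1,2,3$ parts of $(iii)$--$(iv)$. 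Reaching $\dim$ zeros would require a non-Chebyshev mechanism your proposal never identifies --- for instance, exploiting that $h^i(1-h)\ln h$ vanishes at \emph{both} endpoints, so that $(1-h)(a+bh\ln h)$ with $0<a\ll b$ already has two zeros coming from only two parameters. In fact, the situation is worse: taking \eqref{systemwM1} at face value, the claimed value is unreachable by \emph{any} argument, since every term carries the factor $(1-h)$; writing $M_k=(1-h)N_k$ with $N_k=P_{n+1}(h)+\sum_{i=1}^{[\frac{n+1}{2}]}r_{ki}h^i\ln h$, the $\left([\frac{n+1}{2}]+1\right)$-th derivative of $N_k$ is a rational function whose numerator is a polynomial of degree at most $n$, and Theorem \ref{upperbound1} (with $G\equiv1$) then caps the zeros of $M_k$ on $(0,1)$ at $n+1+[\frac{n+1}{2}]$. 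So either the formulas as reproduced in this paper lose information from \cite{Wang2016} --- in which case your step 1 must first recover the correct, finer expressions before any counting --- or the $k=1,2,3$ lower bounds need an argument of a completely different nature (this tension also touches the paper's Theorem \ref{GHIR1}, whose claimed equality invokes the quoted lower bound). Either way, the proof lives or dies here, not where you placed the difficulty: the upper bound $n+1+2[\frac{n+1}{2}]$ in $(iv)$ follows from the comparatively soft differentiate-away-the-logarithms computation of exactly the kind the paper performs for Theorem \ref{GHIR1}, so the iterated Horozov--Iliev bookkeeping you flag as the hard part is not the obstruction.
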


In \cite{Yang2020}, the author considered the following two piecewise smooth near-integrable systems
 \begin{equation}\label{ruh2}
 \left (
\begin{array}{ll}
\dot{x}\\
\dot{y}\\
\end{array}
\right )= \left (
\begin{array}{ll}
\sqrt{2}xy+\epsilon f_k(x,y)\\
\frac{\sqrt{2}}{4}(1-x^2+2y^2)+\epsilon g_k(x,y)\\
\end{array}
\right ),
\quad (x,y)\in D_k,\quad k=1,2,3,4,\\
\end{equation}
and
\begin{equation}\label{yruh2}
 \left (
\begin{array}{ll}
\dot{x}\\
\dot{y}\\
\end{array}
\right )= \left (
\begin{array}{ll}
\frac{\sqrt{2}}{2}xy+\epsilon f_k(x,y)\\
\frac{\sqrt{2}}{2}(2-2x+y^2)+\epsilon g_k(x,y)\\
\end{array}
\right ),
\quad (x,y)\in D_k,\quad k=1,2,3,4,\\
\end{equation}
where
$$
f_k(x,y)=\sum\limits_{i+j=0}^{n}c_{kij}x^iy^j,\quad g_k(x,y)=\sum\limits_{i+j=0}^{n}d_{kij}x^iy^j ,
$$
$$D_1=\{(x,y)|x>1,y>0\},\quad D_2=\{(x,y)|x>1,y<0\} ,$$
 $$D_3=\{(x,y)|x<1, y<0\}, \quad D_4=\{(x,y)|x<1,y>0\},$$
and $\epsilon$ is small enough. From \cite{Yang2020}, system $\eqref{ruh2}|_{\epsilon=0}$ has two isochronous centers $(\pm1,0)$ and the families of periodic orbits around the centers $(\pm1,0)$ are given by $\overline{L}_h=\{(x,y)|H(x,y)=h\}$ for $h\in (-\infty,-1)\bigcup(0,+\infty) $, where
$$H(x,y)=\frac{1}{x}\left(\frac{1}{2}y^2+\frac{1}{4}x^2-\frac{1}{2}x+\frac{1}{4}\right).$$
One also knows from \cite{Yang2020} that system $\eqref{yruh2}|_{\epsilon=0}$ has one isochronous center $(1,0)$ and the family of periodic orbits around the center $(1,0)$ is given by $\widehat{L}_h=\{(x,y)|H(x,y)=h\}$ for $h\in (0,1)$, where
$$H(x,y)=\frac{1}{x^2}\left(\frac{1}{2}y^2+x^2-2x+1\right).$$
The main results obtained in \cite{Yang2020} are as follows.
\begin{theorem}[{See \cite{Yang2020}}]\label{301}

(i) The number of limit cycles of system \eqref{ruh2} bifurcating from the period annuli around the isochronous centers $(\pm1,0)$ is no more than $11n+34$ for $n\geq1.$

(ii) The number of limit cycles of system \eqref{yruh2} bifurcating from the period annulus around the isochronous center $(1,0)$ is no more than $21n-25$ for $n\geq 3$; $39$ for $n=1,2.$

\end{theorem}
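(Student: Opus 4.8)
The plan is to reduce the limit-cycle count to a count of zeros of the first order Melnikov function and then to bound the latter. By the Han--Yang result recalled in the introduction, it suffices to show that the piecewise Melnikov function $M(h)$ of \eqref{ruh2} (resp. of \eqref{yruh2}) has at most $11n+34$ zeros on $(-\infty,-1)\cup(0,+\infty)$ (resp. at most $21n-25$, or $39$, zeros on $(0,1)$), multiplicity taken into account. First I would make the geometry explicit: solving $H(x,y)=h$ shows that each period orbit is an ellipse. For \eqref{ruh2} the relation becomes $2y^2+(x-(1+2h))^2=4h(h+1)$, an ellipse centred at $(1+2h,0)$; for \eqref{yruh2} one gets $\tfrac12 y^2+(1-h)\bigl(x-\tfrac1{1-h}\bigr)^2=\tfrac{h}{1-h}$, again an ellipse. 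The switching lines $x=1$ and $y=0$ cut each ellipse into four arcs $L_h^k=\overline{L}_h\cap D_k$, and I would locate the crossing points as (algebraic) functions of $h$.

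Next I would apply the formula for the first order Melnikov function of a piecewise near-integrable system (as established in \cite{Liu2010,Wang2016,Tian2021}) to write
\begin{equation*}
M(h)=\sum_{k=1}^4 c_k(h)\int_{L_h^k}\bigl(g_k(x,y)\,dx-f_k(x,y)\,dy\bigr),
\end{equation*}
where the weights $c_k(h)$ come from the matching conditions at the switching points and are ratios of partial derivatives of the $H_k$ there. Substituting the polynomial perturbations and parametrising each arc by the ellipse angle $\theta$, each term becomes a sum of arc integrals of monomials $\int x^iy^j\,dx$ and $\int x^iy^j\,dy$ with $i+j\le n$. Using the orbit relation $H=h$ together with integration by parts to eliminate boundary contributions, I would reduce these $O(n^2)$ integrals to a small spanning family of functions of $h$: a \emph{bulk} part built from powers of the centre and semi-axis parameters (polynomial in $h$ and in the associated radicals) and a \emph{boundary} part supported on the crossing points. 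The upper bounds $11n+34$ and $21n-25$ should emerge as the dimension of this spanning family, corrected by the overhead of the zero-counting step.

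Finally I would count the zeros of $M(h)$ by organising the spanning family so that either the Chebyshev criterion applies (verifying the Wronskians $W_k\neq0$) or Horozov--Iliev's identity \eqref{HIM1} is available: choosing $G$ so that $M/G$ differentiates into $F/G^2$ with $F$ and $G$ of controllable degree, \eqref{HIE1} then yields $\lambda\le\mu+p+1$. I expect the special cases $n=1,2$ in part (ii) to require the separate bound $39$ because the generic degree count $21n-25$ is not yet sharp there.

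The step I expect to be hardest is the reduction to the minimal spanning family together with the zero count: because the $L_h^k$ are arcs rather than full periods, the integrals carry genuine boundary terms evaluated at the $h$-dependent crossing points, so the generators are not the clean closed-orbit Abelian integrals but mixtures of polynomial-in-$h$ and algebraic pieces. Ensuring that the Chebyshev or Horozov--Iliev machinery still controls the zeros of such a mixture, and doing so with a count linear in $n$, is the crux; the piecewise weights $c_k(h)$ and the four-region bookkeeping add a further layer that must be handled before any of the standard criteria can be invoked.
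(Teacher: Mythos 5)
This statement is quoted from \cite{Yang2020}; the paper you are reading does not reprove it, but instead proves the sharper bounds of Theorems \ref{yfirst} and \ref{ysecond} in Section \ref{section4}, and those proofs show what a complete argument must contain: (a) the explicit closed forms \eqref{ruh2M1}, \eqref{ruh2M2}, \eqref{yruh2M1}, \eqref{yruh2M2} of the Melnikov functions, obtained in \cite{Yang2020} by deriving Picard--Fuchs equations, in which the coefficients are polynomials whose degrees are controlled linearly in $n$; and (b) a mechanism for counting zeros of a function that mixes polynomials, the radicals $\sqrt{h}$, $\sqrt{1\pm h}$, and the transcendental generators $\arctan\sqrt{h}$, $\arcsin\sqrt{h}$, $\ln(\cdot)$. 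Your first stage (Han--Yang reduction, the ellipse equations for the orbits, the piecewise Melnikov formula with ratio weights at the switching points) is correct in spirit and your explicit ellipse computations are right; it is stage (b) where the proposal has a genuine gap.

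Neither of the two tools you invoke can close the argument. The Chebyshev criterion would require verifying that a generating family whose size grows linearly with $n$ is an ECT-system, i.e.\ nonvanishing of Wronskians of unbounded order, which is neither feasible nor true in general for such mixed algebraic--transcendental families. A single application of the Horozov--Iliev identity \eqref{HIM1} also cannot work: one derivative does not remove $\arctan\sqrt h$, $\arcsin\sqrt h$, or the logarithms (it only shifts their polynomial companions), so there is no choice of a single $G$ making $(M/G)'=F/G^2$ with $F$ algebraic of controlled degree. The idea that is actually needed --- and that both \cite{Yang2020} and this paper use --- is to differentiate $m$ times with $m$ growing linearly in $n$ (for instance $m=n+1$ for \eqref{ruh2M1}, and a two-stage differentiation with $m=n+[\frac{n-1}{2}]$ and then $t=n+m-1$ for \eqref{yruh2M1}), which converts every transcendental generator into an algebraic function of the form $\bigl(P(h)+\sqrt{\cdot}\,Q(h)\bigr)/(\text{powers of }h,\ 1\pm h)$, whose zeros can be counted by squaring; one must then transfer that count back through all $m$ derivatives at a controlled cost. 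That transfer is precisely Theorem \ref{upperbound1} ($\lambda\le\mu+mp+m$), or in its weaker iterated form the derivative lemma of \cite{Han2021}; without it, each of your $O(n)$ differentiations loses a summand and the bookkeeping collapses. Finally, your hope that integration by parts ``eliminates boundary contributions'' is misplaced: the boundary terms at the $h$-dependent crossing points are exactly what produce $\ln(1-h)$, $\ln\frac{1+\sqrt h}{1-\sqrt h}$, $\arcsin\sqrt h$ in \eqref{g4i}, and the linear-in-$n$ degree control of their polynomial coefficients is delivered by the Picard--Fuchs equations, not by direct parametrization of the arcs.
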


This paper aims to establish a development of Horozov-Iliev's method and apply it to improve the two theorems above. The rest of this paper is organized as follows. In section 2, we establish and prove our main theorem which is  a development of Horozov-Iliev's method. In sections 3 and 4, we apply our main theorem to estimate the number of limit cycles of systems \eqref{WHs}, \eqref{ruh2} and \eqref{yruh2}, respectively. By using our main theorem, we obtain shaper and more precise estimation of the number of limit cycles for these three systems. In the last section, we present a conclusion.

\section{Development of Horozov-Iliev's method}\label{section2}
In this section, we establish the following theorem.
\begin{theorem}\label{upperbound1}
Let $ M:(\alpha,\beta)\longrightarrow \mathbb{R}$ be a $C^k$ function, $k\geq 1.$ Suppose that there exist two $C^k$ functions $G$ and $\widetilde{F}$ defined on $(\alpha,\beta)$ such that
\begin{equation}\label{GHI}
\left(\frac{M(h)}{G(h)}\right)^{(m)}=\frac{\widetilde{F}(h)}{G^{m+1}(h)},\quad h\in (\alpha,\beta)\setminus \mathbb{S},
\end{equation}
where $1\leq m\leq k$ and $\mathbb{S}$ is the set of zeros of $G(h)$ in $h$ on $(\alpha,\beta)$. Then, we have
\begin{equation}\label{te}
\lambda\leq \mu+mp+m,
\end{equation}
where $\lambda$, $\mu$, and $p$ are the number of zeros of $M(h)$, $\widetilde{F}(h)$, and $G(h)$ in $h$ on $(\alpha,\beta)$ including the multiplicities, respectively.

\end{theorem}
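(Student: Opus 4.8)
The plan is to generalize the classical Horozov-Iliev argument, which handles the case $m=1$, by iterating Rolle's theorem together with a careful accounting of how zeros and poles propagate through repeated differentiation of $M/G$. First I would set up notation: write $Z(\phi)$ for the number of zeros of a function $\phi$ on $(\alpha,\beta)$ counted with multiplicity, so that the goal \eqref{te} reads $Z(M)\le Z(\widetilde F)+m\,Z(G)+m$. The starting observation is that on $(\alpha,\beta)\setminus\mathbb{S}$ the sign of $M$ agrees with the sign of $M/G$ up to the sign of $G$, and more importantly that the zeros of $M$ and of $M/G$ coincide away from $\mathbb{S}$. The identity \eqref{GHI} then ties the $m$-th derivative of $M/G$ directly to $\widetilde F$, which is the quantity whose zero count we are allowed to use on the right-hand side.

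The key steps, in order, are as follows. I would first bound $Z(M)$ in terms of $Z(M/G)$ plus a contribution from $\mathbb{S}$: since $M/G$ can fail to be defined (or can acquire extra zeros/poles) exactly at the $p$ zeros of $G$, the difference between $Z(M)$ on $(\alpha,\beta)$ and the number of zeros of $M/G$ on the punctured set is controlled by $Z(G)=p$. Next comes the heart of the argument — relating $Z(M/G)$ to $Z\bigl((M/G)^{(m)}\bigr)$ by applying Rolle's theorem $m$ times. Each application of Rolle's theorem to a $C^m$ function on an interval says that between two consecutive zeros of a function there lies a zero of its derivative, so passing from $\phi$ to $\phi'$ costs at most one zero per gap; iterating this $m$ times and treating multiplicities carefully yields an inequality of the shape
\begin{equation*}
Z(M/G)\le Z\bigl((M/G)^{(m)}\bigr)+m.
\end{equation*}
Finally, I would substitute \eqref{GHI}: because $G^{m+1}$ vanishes only on $\mathbb{S}$, the zeros of $(M/G)^{(m)}=\widetilde F/G^{m+1}$ on the punctured interval are exactly the zeros of $\widetilde F$ that are not cancelled by $G$, so $Z\bigl((M/G)^{(m)}\bigr)\le Z(\widetilde F)=\mu$, possibly up to a further $p$-term bookkeeping adjustment. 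Chaining these inequalities produces $\lambda\le \mu+mp+m$.

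The main obstacle I anticipate is the bookkeeping at the zeros of $G$, that is, controlling the discrepancy between zero counts of $M$, $M/G$, and $(M/G)^{(m)}$ near the points of $\mathbb{S}$. A naive application of Rolle's theorem only counts \emph{simple} crossings and ignores multiplicity, whereas \eqref{te} counts zeros \emph{with} multiplicity; reconciling these requires the standard refinement that a zero of $\phi$ of multiplicity $r$ forces a zero of $\phi'$ of multiplicity $r-1$ at the same point, so the total multiplicity-counted loss across $m$ differentiations is exactly the additive constant that becomes the term $m$, while each of the $p$ poles introduced by powers of $G$ can absorb up to $m$ additional zeros, producing the factor $mp$. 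The delicate point is to verify that the factor on $p$ is precisely $m$ and not larger — this is where the exponent $m+1$ on $G$ in the denominator of \eqref{GHI} must be used sharply, since a pole of order $m+1$ at a simple zero of $G$ contributes at most $m+1$ to the zero-pole balance but one unit is already absorbed in passing from $M$ to $M/G$. I would handle this by a local analysis at each point of $\mathbb{S}$, comparing the orders of vanishing of $M$, $\widetilde F$, and $G$, and summing the local contributions to recover the global estimate \eqref{te}.
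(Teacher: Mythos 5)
Your overall strategy (puncture the interval at the zeros of $G$, iterate a multiplicity-counting version of Rolle's theorem, and do a local analysis at the points of $\mathbb{S}$) is the same as the paper's, but your accounting contains a genuine gap that would prevent the argument from closing. The problem is your first step: you claim that the difference between the zero count of $M$ on $(\alpha,\beta)$ and that of $M/G$ on the punctured set is ``controlled by $Z(G)=p$.'' This is false. A point $h_i\in\mathbb{S}$ can be a zero of $M$ of arbitrarily high multiplicity $l_i$, completely unrelated to $p$ and $m$; for instance $M(h)=(h-h_i)^{100}$, $G(h)=h-h_i$ is compatible with \eqref{GHI} (take $\widetilde{F}$ proportional to $(h-h_i)^{100}$), and all $100$ zeros of $M$ vanish from the punctured set, so no estimate of the form $\lambda\le Z_{\mathrm{punct}}(M/G)+Cp$ can hold. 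The only way to absorb these zeros is the paper's key local lemma: writing $l_i$, $a_i$, $t_i$ for the multiplicities of $M$, $G$, $\widetilde{F}$ at $h_i$, one compares orders of vanishing on the two sides of \eqref{GHI} — the left side has order at least $l_i-a_i-m$ at $h_i$, the right side has order exactly $t_i-(m+1)a_i$ — which forces $l_i\le t_i-ma_i+m\le t_i$ since $a_i\ge 1$. Thus the zeros of $M$ sitting on $\mathbb{S}$ must be charged against $\mu$, not against $p$. You do gesture at exactly this order comparison in your final paragraph, and you correctly sense that the exponent $m+1$ is what makes it work, but you then route its output into the $mp$ term, which is the wrong ledger: no ``zero-pole balance'' can bound $l_i$ by a quantity depending only on $p$ and $m$.

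Relatedly, your displayed inequality $Z(M/G)\le Z\bigl((M/G)^{(m)}\bigr)+m$ is only valid on a single interval on which $M/G$ is $C^m$ throughout; on the punctured set, which is a disjoint union of $s+1$ open subintervals ($s\le p$ being the number of \emph{distinct} zeros of $G$), Rolle must be applied on each piece separately, giving a total loss of $(s+1)m$. This piecewise application — not poles ``absorbing'' zeros — is where the term $mp+m$ actually comes from: with $\lambda_j$, $\mu_j$ the zero counts of $M$ and $\widetilde{F}$ on the $j$-th subinterval, one gets $\lambda=\sum_i l_i+\sum_j\lambda_j\le\sum_i t_i+\sum_j(\mu_j+m)\le\mu+(s+1)m\le\mu+mp+m$. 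Once you replace your step-one claim by the inequality $l_i\le t_i$ and correct the Rolle count to $(s+1)m$, your outline becomes precisely the paper's proof.
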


\begin{proof} We prove \eqref{te} for two cases of $p=0$ and $p\neq 0$, separately.

If $p=0$, then by \eqref{GHI}, $\left(\frac{M(h)}{G(h)}\right)^{(m)}$ has $\mu$ zeros on $(\alpha,\beta)$, multiplicity taken into account. According to Lemma 2.3 in \cite{Han2021}, it follows that $\left(\frac{M(h)}{G(h)}\right)^{(m-1)}$ has at most $\mu+1$ zeros on $(\alpha,\beta)$, multiplicity taken into account. In a similar way, we can obtain that $\frac{M(h)}{G(h)}$ has at most $\mu+m$ zeros on $(\alpha,\beta)$, multiplicity taken into account. Therefore, \eqref{te} holds for $p=0$.

If $p\neq 0$, there are exactly $s$ different zeros of $G(h)$ on $(\alpha,\beta)$, denoted by $h_{1}$, $\ldots,$ $h_{s}$, for some integer $s$ satisfying $1\leq s\leq p.$ We
 can assume that $h_{1}<h_{2}<\cdots<h_{s}$ and that they have multiplicities $a_1$, $\ldots$, $a_s,$ respectively. Then,
 $$a_1+\cdots+a_s=p, \quad a_1\geq 1,\ldots, a_s\geq 1.$$
 For $i=1,2,\ldots,s$, we assume that $h_{i}$ is a zero of $M(h)$ and $\widetilde{F}(h)$ with multiplicities $l_i$ and $t_i$, respectively. If $M(h_{i})\neq0$ ($\widetilde{F}(h_{i})\neq0 $) for some $i$, then we take $l_i=0$ $(t_i=0)$. Thus, $l_i\geq0$ and $t_i\geq0$ for $i=1,2,\ldots,s.$ Next, we prove $l_i\leq t_i$, $i=1,2,\ldots, s.$ By Lemma 2.2 in \cite{Han2021}, there exist functions $m_i\in C^{k-l_i}$, $g_i\in C^{k-a_i}$, $ f_i\in C^{k-t_i}$ satisfying $m_i(h_{i})\neq 0$, $g_i(h_{i})\neq 0$, $f_i(h_{i})\neq 0$ such that for $i=1,2,\ldots,s$,
  $$M(h)=(h-h_{i})^{l_i}m_i(h),$$
  $$\quad G(h)=(h-h_{i})^{a_i}g_i(h),$$
  $$ \widetilde{F}(h)=(h-h_{i})^{t_i}f_i(h). $$
Then, for $i=1,2,\cdots,s$,
\begin{equation}\label{MG}
\frac{M(h)}{G(h)}=\frac{(h-h_{i})^{l_i-a_i}m_i(h)}{g_i(h)}
 \end{equation}
and
\begin{equation}\label{MG2}
\frac{\widetilde{F}(h)}{G^{m+1}(h)}=\frac{(h-h_{i})^{t_i-(m+1)a_i}f_i(h)}{[g_i(h)]^{m+1}}.
\end{equation}
Thus, by \eqref{MG}, for $i=1,2,\ldots,s$,
$$ \left(\frac{M(h)}{G(h)}\right)^{(m)}=(h-h_{i})^{l_i-a_i-m}\phi_i(h), \quad 0<|h-h_{i}|\ll1,$$
where the function $\phi_{i}$ is $C^{k-m}$ for $|h-h_{i}|\ll1$. Here, $\phi_{i}(h_i)$ may be zero or not zero, $i=1,2,\ldots,s$. Then, from \eqref{GHI} and \eqref{MG2}, we have
$$ l_i-a_i-m\leq t_i-(m+1)a_i,\quad i=1,2,\ldots,s,$$
i.e.,
$$l_i\leq t_i-ma_i+m,\quad i=1,2,\ldots,s.$$
 Hence, by $a_i\geq 1$, we have $l_i\leq t_i$, $i=1,2,\ldots,s.$

Let $h_{0} =\alpha$ and $h_{s+1}=\beta$, and we assume that $M(h)$ and $\widetilde{F}(h)$ respectively have $\lambda_i$ and $\mu_i$ zeros on $(h_{i},h_{i+1})$, multiplicity taken into account, $i=0,1,\ldots, s$. Then, by the proof of the case $p=0$, we have $\lambda_i\leq \mu_i+m$, $i=0,1,\ldots, s$.
Therefore, the total number of zeros of $M(h)$ on $(\alpha,\beta)$ is estimated by
$$\aligned
\lambda&=\sum\limits_{j=1}^s l_j+\sum\limits_{j=0}^s \lambda_j\\
&\leq \sum\limits_{j=1}^s t_j +\sum\limits_{j=0}^s (\mu_j+m)\\
&= \sum\limits_{j=1}^s t_j+\sum\limits_{j=0}^s \mu_j+(s+1)m\\
&=\mu+(s+1)m\\
&\leq \mu+mp+m,
\endaligned$$
since $s\leq p$. The proof is ended.
\end{proof}
\begin{remark}
From the above proof, it is easy to see that Theorem \ref{upperbound1} also holds for $M(h)$, $G(h)$, and $\widetilde{F}(h)$ defined in closed interval or half-closed and half-open interval or unbounded interval. Obviously, the method to find an upper bound of the number of zeros of $M(h)$ introduced in Theorem \ref{upperbound1} is a generalization of Horozov-Iliev's method. Moreover, it improves Horozov-Iliev's method, due to multiplicity taken into account in Theorem \ref{upperbound1}.
\end{remark}

\section{The number of limit cycles of system $(\ref{WHs})$  }\label{section3}
 In this section, we estimate the number of limit cycles bifurcating from the period annulus $\cup_{
   h\in (0,1) }L_{h}$ of system \eqref{WHs} for cases $(1)$-$(4)$. For this purpose, we first introduce some conclusions obtained in \cite{Wang2016}.

For $n\geq 2$, denote by $M_k(h)$ the first order Melnikov function of system \eqref{WHs} for case $(k)$, $k=1,2,3,4$. Then, from Lemmas 4.1, 5.1, 6.1, and 7.1 in \cite{Wang2016}, we have for $k=1,2,3$,
\begin{equation}\label{systemwM1}
\aligned
M_k(h)&=\sum\limits_{i=0}^{n+1}u_{ki}(1-h)^{i+1}+\sum\limits_{i=1}^nv_{ki}h^i(1-h)+
\sum\limits_{i=1}^{[\frac{n+1}{2}]}r_{ki}h^i(1-h)\ln h,\quad h\in(0,1),\\
\endaligned
\end{equation}
and
\begin{equation}\label{systemwM}
\aligned
M_4(h)&=\sum\limits_{i=0}^{n}u_{4i}(1-h)^{i+1}+\sum\limits_{i=1}^nv_{4i}h^i+
\sum\limits_{i=1}^{[\frac{n+1}{2}]}r_{4i}h^i\ln h,\quad h\in(0,1),\\
\endaligned
\end{equation}
where $u_{ki}$, $v_{kj}$, $r_{km}$, $i=0,1,\cdots,n$, $j=1,\cdots,n$, $m=1,2,\cdots,[\frac{n+1}{2}]$, $k=1,2,3,4$, and $u_{s,n+1}$, $s=1,2,3$ are independent coefficients and can be taken as free parameters.

Now, we are in a position to show one of our main results.
\begin{theorem}\label{GHIR1}
Consider system \eqref{WHs} with $n\geq2$. We have

\begin{description}
\item $(i)$ $Z_k(n)=n+2+[\frac{n+1}{2}]$ for $k=1,2,3$, and $Z_4(n)=n+1+[\frac{n+1}{2}]$.

\item $(ii)$ The number of limit cycles bifurcating from the period annulus $\cup_{
   h\in (0,1) }L_{h}$ is $n+2+[\frac{n+1}{2}] $ for case $(k)$, $k=1,2,3$ and $n+1+[\frac{n+1}{2}] $ for case $(4)$, multiplicity taken into account, by the first order Melnikov function.
\end{description}
\end{theorem}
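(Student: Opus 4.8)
The plan is to sandwich each $Z_k(n)$ between the lower bound already recorded in Theorem \ref{71}(iv) and a matching upper bound supplied by our development of Horozov--Iliev's method, Theorem \ref{upperbound1}; part (ii) is then read off from the zero count via the correspondence between zeros of $M_k$ and bifurcating limit cycles. Thus it suffices to prove the reverse inequalities $Z_k(n)\le n+2+[\frac{n+1}{2}]$ for $k=1,2,3$ and $Z_4(n)\le n+1+[\frac{n+1}{2}]$.

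First I would put the explicit Melnikov functions into the normal form \emph{polynomial plus polynomial times} $\ln h$. In \eqref{systemwM1} every term carries the factor $1-h$, which does not vanish on $(0,1)$; dividing it out shows that on $(0,1)$ the zeros of $M_k$ coincide, with multiplicity, with those of $\widetilde M_k=P_k+Q_k\ln h$, where $P_k$ ranges over polynomials of degree at most $n+1$ and $Q_k(h)=\sum_{i=1}^{[\frac{n+1}{2}]}r_{ki}h^i$. For $k=4$ there is no common factor and \eqref{systemwM} is already of this form, $M_4=P_4+Q_4\ln h$, with the polynomial block coming from a slightly different range of indices; keeping track of that difference is what ultimately separates the value $n+1+[\frac{n+1}{2}]$ for $k=4$ from $n+2+[\frac{n+1}{2}]$ for $k=1,2,3$.

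The core step is one application of Theorem \ref{upperbound1} with $G$ taken to be the logarithmic coefficient $Q_k$. Because $\widetilde M_k/Q_k=P_k/Q_k+\ln h$, a single differentiation removes the transcendental part: $(\widetilde M_k/Q_k)'=\widetilde F_k/Q_k^{2}$ with $\widetilde F_k=P_k'Q_k-P_kQ_k'+Q_k^2/h$, and since $h\mid Q_k$ this $\widetilde F_k$ is a genuine polynomial. I would then take $m=1$ in \eqref{te}, bound $\mu$ by the number of zeros of $\widetilde F_k$ and $p$ by the number of zeros of $Q_k=h\sum_i r_{ki}h^{i-1}$ on $(0,1)$, and simplify $\lambda\le \mu+p+1$. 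Matching this against the lower bound of Theorem \ref{71} then pins $Z_k(n)$ to its exact value and gives (i).

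The step I expect to be delicate is making $\mu+mp+m$ land exactly on the extremal constant rather than on a bound too large by a term of order $[\frac{n+1}{2}]$. A crude degree count for $\widetilde F_k$ overshoots, so the argument must exploit that the $[\frac{n+1}{2}]-1$ possible zeros of $Q_k$ and the zeros of $\widetilde F_k$ cannot all be charged simultaneously; if necessary I would first lower the degree of the polynomial block by a Rolle-type reduction (Lemma 2.3 of \cite{Han2021}) and reserve Theorem \ref{upperbound1} for the purely logarithmic block, possibly choosing $m>1$ so that $Q_k^{m+1}/h^{m}$ remains polynomial while $\mu$ drops. Once the upper bounds are secured, part (ii) follows from the result of Han and Yang quoted in the Introduction: at most $Z_k(n)$ limit cycles bifurcate from $\cup_{h\in(0,1)}L_h$, counted with multiplicity, while the coefficient configuration realizing the lower bound of Theorem \ref{71} produces that many simple zeros of $M_k$ and hence that many hyperbolic limit cycles, so the two counts coincide.
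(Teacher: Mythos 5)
Your overall skeleton---sandwiching $Z_k(n)$ between the lower bound of Theorem \ref{71} and an upper bound from Theorem \ref{upperbound1}, then deducing (ii) from Theorem 4.4 of \cite{Han2021} together with the Hopf lower bound---is the right one, and your reduction dividing out the nonvanishing factor $(1-h)$ is sound (it even buys the same ``$+1$'' that the paper gets by instead extending $M_k$ to $(0,1]$ and discarding the forced zero at $h=1$). The gap is in your core step. Applying Theorem \ref{upperbound1} with $m=1$ and $G=Q_k$ gives $\widetilde F_k=P_k'Q_k-P_kQ_k'+Q_k^2/h$, a polynomial of degree up to $n+[\frac{n+1}{2}]$, while $Q_k$ can have up to $[\frac{n+1}{2}]-1$ zeros in $(0,1)$; so \eqref{te} yields only
\[
\lambda\le\mu+p+1\le\Bigl(n+\Bigl[\tfrac{n+1}{2}\Bigr]\Bigr)+\Bigl(\Bigl[\tfrac{n+1}{2}\Bigr]-1\Bigr)+1=n+2\Bigl[\tfrac{n+1}{2}\Bigr],
\]
which strictly exceeds the required $n+2+[\frac{n+1}{2}]$ for every $n\ge5$. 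You flag this overshoot yourself, but neither proposed repair closes it: the zeros of $\widetilde F_k$ bear no forced relation to the zeros of $Q_k$ (at a simple zero of $Q_k$ the terms $P_k'Q_k$ and $Q_k^2/h$ vanish but $P_kQ_k'$ in general does not, so such a point need not be a zero of $\widetilde F_k$), hence the hoped-for ``cannot be charged simultaneously'' cancellation is not available; and raising $m$ while keeping $G=Q_k$ makes the bound worse, not better, since the numerator of $(P_k/Q_k)^{(m)}$ has degree growing like $n+1+m([\frac{n+1}{2}]-1)$ and the penalty $mp+m$ in \eqref{te} grows as well.

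The missing idea is to take $G\equiv1$ (so $p=0$ and Theorem \ref{upperbound1} reduces to iterated Rolle) and choose $m$ large enough to \emph{annihilate} the logarithms rather than to cancel them against $G$. Since $(h^i\ln h)^{(m)}=B_{mi}h^{i-m}$ is log-free as soon as $m\ge i+1$, taking $m=[\frac{n+1}{2}]+1$ derivatives of your reduced function $\widetilde M_k=P_{k,n+1}+Q_k\ln h$ gives $(\widetilde M_k)^{(m)}=P_{n+1}(h)/h^{m}$ for some polynomial $P_{n+1}$ of degree at most $n+1$, which has at most $n+1$ zeros on $(0,1)$; then $\lambda\le(n+1)+m=n+2+[\frac{n+1}{2}]$, exactly the sharp bound, and the identical computation for $M_4$ (polynomial part of degree $n+1$, log coefficient of degree $[\frac{n+1}{2}]$ divisible by $h$) gives $n+1+[\frac{n+1}{2}]$. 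This is precisely the paper's argument, carried out on the unreduced $M_k$ with $m=[\frac{n+1}{2}]+2$ and the zero at $h=1$ subtracted at the end. Your closing argument for part (ii) is essentially the paper's and goes through once these upper bounds are in place.
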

\begin{proof} Obviously, $M_k(h)$ given in \eqref{systemwM1} can be analytically extended to the interval $(0,1]$, $k=1,2,3$. Then, for $k=1,2,3$, $M_k(h)$ can be rewritten as
$$M_k(h)=P_{k,n+2}(h)+\sum\limits_{i=1}^{[\frac{n+1}{2}]+1}A_{ki}h^i\ln h,\quad 0<h\leq1,$$
where $P_{k,n+2}(h) $ is a polynomial of degree $n+2$, and
$$ A_{ki}=\left\{ \!\!\!
\begin{array}{lll}
r_{k1}, \quad &i=1, \\
r_{ki}-r_{k, i-1},\quad &1<i<[\frac{n+1}{2}]+1,\\
-r_{k[\frac{n+1}{2}]},\quad & i=[\frac{n+1}{2}]+1.
\end{array}
\right.$$
Consider the $\left([\frac{n+1}{2}]+2\right)$th order derivative of $M_k(h)$, $k=1,2,3$. We have
$$\left(M_k(h)\right)^{([\frac{n+1}{2}]+2)}=P_{k,n-[\frac{n+1}{2}]}(h)+\sum\limits_{i=1}^{[\frac{n+1}{2}]+1}A_{ki}\left(h^i\ln h\right)^{([\frac{n+1}{2}]+2)}, \quad k=1,2,3.$$
Notice that
$$\left(\ln h\right)^{(m)}=\frac{(-1)^{m-1}(m-1)!}{ h^{m}}, \quad m\geq 1.$$
Then, for $m\geq i+1$, we have
$$\left(h^i\ln h\right)^{(m)}=\sum\limits_{j=0}^iC_{m}^ji(i-1)\cdots(i-j+1)h^{i-j}\frac{(-1)^{m-j-1}(m-j-1)!}{h^{m-j}}=\frac{B_{mi}}{h^{m-i}},
$$
where $B_{mi}=\sum\limits_{j=0}^iC_{m}^ji(i-1)\cdots(i-j+1)(-1)^{m-j-1}(m-j-1)!$. Hence, we have
$$\aligned
\left(M_k(h)\right)^{([\frac{n+1}{2}]+2)}&=P_{k,n-[\frac{n+1}{2}]}(h)+\sum\limits_{i=1}^{[\frac{n+1}{2}]+1}\frac{A_{ki}B_{[\frac{n+1}{2}]+2,i}}{h^{[\frac{n+1}{2}]+2-i}}\\
&=P_{k,n-[\frac{n+1}{2}]}(h)+\frac{\sum\limits_{i=1}^{[\frac{n+1}{2}]+1} A_{ki}B_{[\frac{n+1}{2}]+2,i}h^i}{h^{[\frac{n+1}{2}]+2}}\\
&=P_{k,n-[\frac{n+1}{2}]}(h)+\frac{\sum\limits_{i=0}^{[\frac{n+1}{2}]} A_{k,i+1}B_{[\frac{n+1}{2}]+2,i+1} h^i}{h^{[\frac{n+1}{2}]+1}}\\
&=\frac{P_{k,n+1}(h)}{h^{[\frac{n+1}{2}]+1}},\quad k=1,2,3.\\
\endaligned
$$
Then, by Theorem \ref{upperbound1}, we have that for each $k=1,2,3$, $M_k(h) $ has at most $ n+[\frac{n+1}{2}]+3$ zeros in $(0,1]$, multiplicity taken into account. From \eqref{systemwM1}, we have $M_k(1)=0$ for $k=1,2,3$. Thus, for each $k=1,2,3$, $M_k(h)$ has at most $ n+[\frac{n+1}{2}]+2$ zeros in $(0,1)$, multiplicity taken into account. Combining with Theorem \ref{71}, we have that $Z_k(n)=n+[\frac{n+1}{2}]+2$ for each $k=1,2,3$ and $n\geq 2$.

Similarly, $M_4(h)$ can be rewritten as
$$M_4(h)=P_{n+1}(h)+\sum\limits_{i=1}^{[\frac{n+1}{2}]}r_{4i}h^i\ln h,\quad 0<h<1.$$
Consider the $\left([\frac{n+1}{2}]+1\right)$th order derivative of $M_4(h)$ as follows
$$ \aligned
\left(M_4(h)\right)^{([\frac{n+1}{2}]+1)}&=P_{n-[\frac{n+1}{2}]}(h)+\sum\limits_{i=1}^{[\frac{n+1}{2}]}\frac{r_{4i}B_{[\frac{n+1}{2}]+1,i}}{h^{[\frac{n+1}{2}]+1-i}}\\
&=P_{n-[\frac{n+1}{2}]}(h)+\frac{\sum\limits_{i=1}^{[\frac{n+1}{2}]} r_{4i}B_{[\frac{n+1}{2}]+1,i}h^i}{h^{[\frac{n+1}{2}]+1}}\\
&=P_{n-[\frac{n+1}{2}]}(h)+\frac{\sum\limits_{i=0}^{[\frac{n+1}{2}]-1} r_{4,i+1}B_{[\frac{n+1}{2}]+1,i+1} h^i}{h^{[\frac{n+1}{2}]}}\\
&=\frac{P_{n}(h)}{h^{[\frac{n+1}{2}]+1}}.\\
\endaligned$$
Then, by Theorem \ref{upperbound1}, $M_4(h)$ has at most $n+1+[\frac{n+1}{2}]$ zeros in $h$ on $(0,1)$, multiplicity taken into account. Combining with Theorem \ref{71}, we have $Z_4(n)=n+1+[\frac{n+1}{2}]$ for $n\geq 2$.

Notice that for $n\geq 2$ and $k=1,2,3$, $Z_k(n)=n+2+[\frac{n+1}{2}]$. Then, by Theorem 4.4 in \cite{Han2021}, we have that the number of limit cycles of system $(\ref{WHs})$ for cases $(1)$-$(3)$ bifurcating from the period annulus $\cup_{h\in (0,1) }L_{h}$ is at most $n+2+[\frac{n+1}{2}] $, multiplicity taken into account. Recall that Theorem \ref{71} states that the number of limit cycles produced in Hopf bifurcation near the center (i.e., $0<1-h\ll1$) is at least $n+2+[\frac{n+1}{2}]$. Hence, the number of limit cycles bifurcating from the period annulus $\cup_{h\in (0,1) }L_{h}$ is $n+2+[\frac{n+1}{2}] $ for cases $(1)$-$(3)$, multiplicity taken into account.

Similarly, we can obtain that the number of limit cycles bifurcating from the period annulus $\cup_{h\in (0,1) }L_{h}$ is $n+1+[\frac{n+1}{2}] $ for case $(4)$, multiplicity taken into account. The proof is finished.
\end{proof}
\begin{remark}
 Theorem \ref{GHIR1} is an improvement of Theorem $\ref{71}$ on the upper bound of $Z_k(n)$ for $k=1,2,3,4$. Moreover, Theorem \ref{GHIR1} provides the maximum number of limit cycles of system \eqref{WHs} bifurcating from the period annulus $\cup_{h\in (0,1) }L_{h}$ for cases $(1)$-$(4)$, multiplicity taken into account.

\end{remark}

\section{The number of limit cycles of systems \eqref{ruh2} and \eqref{yruh2}}\label{section4}
In this section, we estimate the number of limit cycles bifurcating from the period orbits around the isochronous centers for systems \eqref{ruh2} and \eqref{yruh2}, respectively. For this purpose, we introduce some results obtained in \cite{Yang2020}. In fact, the author in \cite{Yang2020} derived the first order Melnikov functions of systems \eqref{ruh2} and \eqref{yruh2} by establishing some Picard-Fuchs equations. Before presenting these first order Melnikov functions, some notations are given firstly.

For $h\in(0,+\infty)$, let
\begin{equation}\label{f4i}
\aligned
f_{i,1}(h)&=a_i(h^2+h)-\sqrt{2}b_i|h|^{\frac{3}{2}}-\sqrt{2}b_i(h^2+h)\arctan\sqrt{|h|},\\
f_{i,2}(h)&=c_i\sqrt{h^2+h}-2b_ih,\\
f_{i,3}(h)&=a_ih-\sqrt{2}b_i[(h+1)\arctan \sqrt{|h|}-\sqrt{|h|}],\\
f_{i,4}(h)&=\frac{c_i}{2}\ln|2\sqrt{h^2+h}+2h+1|, \\
\endaligned
\end{equation}
where $b_1=1$, $b_2=-1$, and $a_i$, $c_i$, $i=1,2$ are constants.

For $h\in(-\infty,-1)$, let
 \begin{equation}\label{u4i}
\aligned
u_{1}(h)&=-4\sqrt{h^2+h},\\
u_{2}(h)&=c_3 h, \\
u_{3}(h)&=c_3(h^2+h),\\
u_{4}(h)&=4\sqrt{h^2+h}-2(2h+1)\ln|2\sqrt{h^2+h}+2h+1|,
\endaligned
\end{equation}
where $c_3$ is a constant.

For $h\in (0,1)$, let
 \begin{equation}\label{g4i}
\aligned
g_{i,1}(h)&=\widetilde{a_i}h,\\
g_{i,2}(h)&=\widetilde{b_i}\sqrt{h}, \\
g_{i,3}(h)&=2\widetilde{a_i}-2\widetilde{a_i}\sqrt{1-h}+\widetilde{c_i}\sqrt{2h}-\sqrt{2}\widetilde{c_i}\sqrt{1-h}\arcsin\sqrt{h},\\
g_{i,4}(h)&=2\widetilde{b_i}\sqrt{h}-\widetilde{b_i}(1-h)\ln\frac{1+\sqrt{h}}{1-\sqrt{h}}+\widetilde{c_i}(1-h)\ln(1-h)+\widetilde{c_i}h,
\endaligned
\end{equation}
where $ \widetilde{c_1}=1$, $ \widetilde{c_2}=-1$, and $\widetilde{a_i} $, $\widetilde{b_i}$, $i=1,2$ are constants.

The following lemma gives the first order Melnikov functions of systems \eqref{ruh2} and \eqref{yruh2}.
\begin{lemma}[{See \cite{Yang2020}}] (i) Consider system \eqref{ruh2}. For $h\in(0,+\infty)$,
\begin{equation}\label{ruh2M1}
M(h)=\sum\limits_{i=1}^2[\alpha_{i}(h)f_{i,1}(h)+\beta_{i}(h)f_{i,2}(h)+\gamma_i(h)f_{i,3}+\delta_i(h)f_{i,4}(h)]+P_{2n-1}(\sqrt{h}),
\end{equation}
where $P_{2n-1}(u)$ is a polynomial in $u$ of degree at most $2n-1$, and $\alpha_{k}(h)$, $\beta_k(h)$, $\gamma_k(h)$, $\delta_k(h)$ are polynomials of $h$ with
$$ {\rm{deg}}\alpha_k(h), {\rm{deg}}\beta_k(h)\leq n-2, \quad {\rm{deg}}\gamma_k(h)\leq n-1,\quad {\rm{deg}}\delta_k(h)\leq 2, \quad n\geq 3,$$
$$\alpha_k(h)\equiv constant,\quad {\rm{deg}}\beta_k(h), {\rm{deg}}\gamma_k(h), {\rm{deg}}\delta_k(h)\leq 1, \quad n=1,2,\quad k=1,2. $$
For $h\in(-\infty, -1)$,
\begin{equation}\label{ruh2M2}
 M_0(h)=\frac{1}{2h+1}[\alpha_{3}(h)u_{1}(h)+\beta_{3}(h)u_{2}(h)+\gamma_3(h)u_{3}(h)+\delta_3(h)u_{4}(h))],
 \end{equation}
where $\alpha_3(h)$, $\beta_3(h)$, $\gamma_3(h)$, and $\delta_3(h)$ are polynomials of $h$ with
$$ {\rm{deg}}\alpha_3(h), {\rm{deg}}\beta_3(h)\leq n-1, \quad {\rm{deg}}\gamma_3(h)\leq n-3,\quad {\rm{deg}}\delta_3(h)\leq 2, \quad n\geq 3,$$
$${\rm{deg}}\alpha_3(h)\leq 2,\quad  {\rm{deg}}\beta_3(h), {\rm{deg}}\gamma_3(h), {\rm{deg}}\delta_3(h)\leq 1,\quad  n=1,2. $$

(ii) Consider system \eqref{yruh2}. For $h\in(0,1)$, if $n\geq 3$, then
\begin{equation}\label{yruh2M1}
M(h)=\frac{1}{(h-1)^{n-2}}\{\sum\limits_{i=1}^2[\alpha_{i}(h)g_{i,1}(h)+\beta_{i}(h)g_{i,2}(h)+\gamma_i(h)g_{i,3}(h)+\delta_i(h)g_{i,4}(h)]+P_{3n-3}(\sqrt{h})\},
\end{equation}
where $P_{3n-3}(u) $ is a polynomial in $u$ of degree at most $3n-3$, and $\alpha_k(h)$, $\beta_k(h)$, $\gamma_k(h)$, $\delta_k(h)$ are polynomials of $h$ with
$$ {\rm{deg}}\alpha_k(h), {\rm{deg}}\delta_k(h)\leq n-2, \quad {\rm{deg}}\beta_k(h), {\rm{deg}}\gamma_k(h)\leq n-1, \quad k=1,2.$$
If $n=1,2$, then
\begin{equation}\label{yruh2M2}
M(h)=\frac{1}{h-1}\{\sum\limits_{i=1}^2[\alpha_{i}(h)g_{i,1}(h)+\beta_{i}(h)g_{i,2}(h)+\gamma_i(h)g_{i,3}(h)+\delta_i(h)g_{i,4}(h)]+P_{5}(\sqrt{h})\},
\end{equation}
where $P_{5}(u) $ is a polynomial in $u$ of degree at most $5$, and $\alpha_k(h)$, $\beta_k(h)$, $\gamma_k(h)$, $\delta_k(h)$ are polynomials of $h$ with
$$ {\rm{deg}}\alpha_k(h), {\rm{deg}}\delta_k(h)\leq 1, \quad {\rm{deg}}\beta_k(h), {\rm{deg}}\gamma_k(h)\leq 2, \quad k=1,2.$$

\end{lemma}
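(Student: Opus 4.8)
The plan is to obtain the formulas for $M(h)$ directly from the integral definition of the piecewise first order Melnikov function, reduce the resulting orbit integrals to a finite family of generating integrals, and then use Picard--Fuchs equations to express $M(h)$ in the stated closed forms with the claimed degree bounds. First I would recall the piecewise Melnikov formula established in \cite{Liu2010} and extended to several switching lines in \cite{Tian2021,Wang2016}: since the switching lines cut the plane into the four regions $D_k$, the function $M(h)$ (resp. $M_0(h)$) is a finite sum over $k$ of line integrals of the one-form $g_k\,dx-f_k\,dy$ along the arc of the periodic orbit lying in $D_k$, together with correction factors evaluated at the points where the orbit meets the switching lines $\{x=1\}$, $\{x=-1\}$ and $\{y=0\}$. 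For system \eqref{ruh2} the orbits are the level curves of $H(x,y)=\frac1x(\frac12 y^2+\frac14 x^2-\frac12 x+\frac14)$ around $(\pm1,0)$, and for system \eqref{yruh2} they are the level curves of $H(x,y)=\frac1{x^2}(\frac12 y^2+x^2-2x+1)$ around $(1,0)$.

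Because $f_k,g_k$ are polynomials of degree $n$, each arc integral is a finite linear combination of monomial moments $\int x^iy^j\,dx$ and $\int x^iy^j\,dy$ taken along the level curve. Using the defining relation $H(x,y)=h$ to eliminate $y^2$ and integrating by parts to lower the exponents, I would reduce this a priori infinite family to a finite basis of generating integrals. Evaluating the elementary integrals that survive, along the explicit parametrization of the conic arcs, is exactly what produces the non-polynomial building blocks $\sqrt{h^2+h}$, $\arctan\sqrt{|h|}$, $\arcsin\sqrt{h}$ and the logarithms appearing in $f_{i,j}$, $u_j$ and $g_{i,j}$.

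The core of the argument, as indicated in \cite{Yang2020}, is to set up Picard--Fuchs equations for the generating integrals: differentiating in $h$ and integrating by parts yields a closed first order linear system $I'(h)=A(h)I(h)+b(h)$ with rational entries, whose solution space is spanned precisely by the listed functions. Substituting this solution back into the decomposition of $M(h)$ gives the representations \eqref{ruh2M1}, \eqref{ruh2M2}, \eqref{yruh2M1} and \eqref{yruh2M2}; tracking how the degrees of $f_k,g_k$ propagate through the reduction then yields the degree bounds on $\alpha_k,\beta_k,\gamma_k,\delta_k$ and on the polynomial remainders $P_{2n-1}$, $P_{3n-3}$ and $P_5$.

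The hard part will be the bookkeeping of the piecewise contributions and of the degrees simultaneously. On one hand, the boundary terms at the switching points across the four regions must be combined consistently, since these are the source of the algebraic (non-integral) terms and an error there changes the very shape of $M(h)$. On the other hand, one must verify that neither the integration by parts nor the solution of the Picard--Fuchs system inflates the degrees beyond the stated bounds, and the low-degree cases $n=1,2$, where the set of independent coefficients and hence the degrees differ from the generic $n\ge3$ case, have to be treated separately.
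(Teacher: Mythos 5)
This lemma is not proved in the paper at all: it is imported verbatim from \cite{Yang2020}, and the paper explicitly notes that the derivation there proceeds by establishing Picard--Fuchs equations for the first order Melnikov functions. Your outline (piecewise Melnikov formula from \cite{Liu2010,Tian2021,Wang2016}, reduction of the arc integrals to a finite family of generating integrals, a Picard--Fuchs system producing the functions $f_{i,j}$, $u_j$, $g_{i,j}$, and degree bookkeeping including the separate low-degree cases $n=1,2$) is essentially the same route as the cited source, so there is nothing to fault beyond the fact that it remains a plan rather than a carried-out computation --- which is also all the paper itself offers for this statement.
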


Now, we estimate the upper bound of the number of limit cycles for systems \eqref{ruh2} and \eqref{yruh2} by Theorem \ref{upperbound1}. Our results are shown in the following two theorems.
\begin{theorem}\label{yfirst}
For system \eqref{ruh2}, the number of limit cycles bifurcating from the period annulus $ \cup_{h\in (-\infty,-1) \bigcup(0, +\infty) }\overline{L}_h$ around the isochronous centers $(\pm1,0)$ is no more than $8n+2$ for $n\geq 3$, counting multiplicity; $21$ for $n=1,2$, counting multiplicity, by the first order Melnikov function.
\end{theorem}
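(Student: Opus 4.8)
The plan is to treat the two period annuli separately. By the lemma preceding the theorem, the first order Melnikov function is given by \eqref{ruh2M1} on $(0,+\infty)$ and by \eqref{ruh2M2} on $(-\infty,-1)$, and these two $h$-ranges exhaust the period annulus around $(\pm1,0)$. By the relationship between zeros of $M$ and limit cycles (Theorem 4.4 in \cite{Han2021}, already invoked in the proof of Theorem \ref{GHIR1}), the number of limit cycles bifurcating from the full annulus is at most, counting multiplicity, the sum of the number of zeros of $M(h)$ on $(0,+\infty)$ and of $M_0(h)$ on $(-\infty,-1)$. Writing $\#Z(f)$ for the number of zeros of $f$ counted with multiplicity, it therefore suffices to bound $\#Z(M,(0,+\infty))$ and $\#Z(M_0,(-\infty,-1))$ and add; I expect contributions of size roughly $6n$ and $2n$, summing to $8n+2$.

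First I would organize each Melnikov function into an algebraic part plus polynomial-coefficient multiples of the genuine transcendental functions. After the substitution $u=\sqrt h$, which is a diffeomorphism of $(0,+\infty)$ and preserves zeros with multiplicity, \eqref{ruh2M1} and \eqref{f4i} put $\widehat M(u):=M(u^2)$ into the shape $P(u)\arctan u+R(u)\ln W+C_1(u)+C_2(u)\sqrt{u^2+1}$, where $W=2\sqrt{h^2+h}+2h+1$ and $P,R,C_1,C_2$ are explicit polynomials whose degrees are read off from the degree bounds on $\alpha_k,\beta_k,\gamma_k,\delta_k$ in the lemma (the coefficient of $\arctan u$ has degree about $2n$, the coefficient of $\ln W$ has small degree). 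On $(-\infty,-1)$, the function $(2h+1)M_0(h)$ has, via \eqref{ruh2M2} and \eqref{u4i}, the simpler form $D(h)+N(h)\sqrt{h^2+h}+E(h)\ln W$ with no arctangent term, which is the reason its zero count is smaller.

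The core of the argument is to strip off the transcendental functions one at a time using Theorem \ref{upperbound1}. The key computations are $(\arctan u)'=\tfrac{1}{1+u^2}$ and $\tfrac{d}{dh}\ln W=\tfrac{1}{\sqrt{h^2+h}}$ (equivalently $\tfrac{d}{du}\ln W=\tfrac{2}{\sqrt{u^2+1}}$), both algebraic. Thus to remove the term $R\ln W$ I divide by $G=R$ and differentiate once: in $(\widehat M/R)'$ the logarithm disappears, so $(\widehat M/R)'=\widetilde F/R^{2}$ with $\widetilde F$ free of $\ln W$, and Theorem \ref{upperbound1} with $m=1$ gives $\#Z(\widehat M)\le\#Z(\widetilde F)+p+1$, $p=\deg R$. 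A second application of the same device removes $\arctan u$ (dividing by its current polynomial coefficient, after clearing the factor $1+u^2$), leaving a purely algebraic function of the form $P_1(h)+P_2(h)\sqrt{h^2+h}$. Its zeros are then bounded by those of the polynomial $P_1^2-P_2^2(h^2+h)$, i.e.\ by the degree of that polynomial on the relevant interval. Running the same scheme on $(2h+1)M_0$, where only one such elimination is required, and adding the two bounds gives $8n+2$ for $n\ge3$; the cases $n=1,2$ are handled identically but with the smaller degree bounds on $\alpha_k,\beta_k,\gamma_k,\delta_k$ recorded in the lemma, yielding $21$.

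The main obstacle is the degree bookkeeping. Each application of Theorem \ref{upperbound1} costs $p+1$, where $p=\deg G$ is the degree of the polynomial one divides by, and dividing by the degree-$2n$ coefficient of $\arctan u$ both inflates this cost and, after clearing the algebraic denominators $1+u^2$ and $\sqrt{u^2+1}$, multiplies the degrees of the surviving coefficients. Controlling these increments so that the three pieces (the two transcendental eliminations and the final algebraic count) add up to exactly the claimed bound, rather than something larger, is the delicate step; it also matters in which order the two transcendentals are removed. Equally delicate is ensuring that the passage from $P_1+P_2\sqrt{h^2+h}$ to $P_1^2-P_2^2(h^2+h)$ neither discards multiplicity information nor introduces extraneous zeros, so one must restrict to the subinterval where the sign condition forced by $\sqrt{h^2+h}>0$ holds.
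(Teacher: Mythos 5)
Your reduction of the theorem to counting zeros of $M$ on $(0,+\infty)$ and of $M_0$ on $(-\infty,-1)$, followed by Theorem 4.4 of \cite{Han2021}, matches the paper, and your structural description of the two Melnikov functions (algebraic part plus polynomial multiples of $\arctan$ and $\ln W$) is correct. But the core of your argument --- iterated \emph{first-order} Horozov--Iliev eliminations, i.e.\ repeated applications of Theorem \ref{upperbound1} with $m=1$ and $G$ equal to the polynomial coefficient of the transcendental term being removed --- cannot reach the bound $8n+2$, and this is precisely where the paper does something different. The paper applies Theorem \ref{upperbound1} \emph{once} per annulus with $G\equiv 1$ (so $p=0$) and a single high-order derivative $m=n+1$: since every polynomial coefficient in $M$ has degree at most $n$ in $h$, the $(n+1)$th derivative annihilates them all outright and simultaneously converts $\arctan\sqrt h$ and $\ln W$ into algebraic functions, so the total cost of the elimination is just $m=n+1$. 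This yields $(M)^{(n+1)}=\bigl(\overline{P}_{2n}+\sqrt{1+h}\,P_{2n-1}\bigr)/\bigl((1+h)^{n+1}h^{n+1/2}\bigr)$, hence at most $4n$ zeros upstairs and $4n+(n+1)=5n+1$ zeros of $M$ on $(0,+\infty)$, and similarly $2n+(n+1)=3n+1$ zeros of $(2h+1)M_0$ on $(-\infty,-1)$, giving $8n+2$.

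Your scheme, by contrast, pays twice at each stage: the cost $p+1=\deg G+1$ of Theorem \ref{upperbound1}, \emph{and} a degree inflation, because each elimination replaces every surviving coefficient $A$ by a Wronskian-type combination $A'G-AG'$ of degree $\deg A+\deg G-1$. Concretely, in the variable $u=\sqrt h$ the coefficient of $\arctan u$ has degree about $2n$; dividing by it costs $2n+1$ on its own, and after both eliminations the residual algebraic function $E_1+E_2\sqrt{1+u^2}$ has coefficients of degree roughly $4n$, so the final squaring step contributes roughly $8n$ further zeros. Whichever order you remove the two transcendentals in, the total on $(0,+\infty)$ comes out on the order of $10n+O(1)$, not $5n+1$ --- i.e.\ your method lands back near the old bound $11n+34$ of Theorem \ref{301} that this theorem is meant to improve. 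So the step you flag as ``delicate'' (making the three pieces add up to exactly $8n+2$) is not delicate but impossible within the iterated $m=1$ framework; the missing idea is exactly the new feature of Theorem \ref{upperbound1}, namely that with $G\equiv1$ one can differentiate $n+1$ times and pay only $n+1$, killing the degree-$n$ polynomial blocks for free instead of dividing by them.
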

\begin{proof} By \eqref{f4i}, \eqref{u4i}, \eqref{ruh2M1}, and \eqref{ruh2M2}, we have for $h\in (0,+\infty)$,
$$
M(h)=
\left\{\!\!\!
\begin{array}{ll}
P_2(h)+P_{1}(h)\sqrt{h}+\overline{P}_{1}(h)\sqrt{h^2+h}+P_2(h)\arctan\sqrt{h}+\widetilde{P}_1(h)g(h),\quad n=1,2,\\ \\
\overline{P}_n(h)+P_{n-1}(h)\sqrt{h}+P_{n-2}(h)\sqrt{h^2+h}+P_n(h)\arctan\sqrt{h}+P_2(h)g(h),\quad n\geq 3,\\
\end{array}
\right.
$$
 and for $h\in(-\infty,-1)$,
 $$M_0(h)=\frac{1}{2h+1}\left\{\!\!\!
\begin{array}{ll}
P_3(h)+P_{2}(h)\sqrt{h^2+h}+\overline{P}_2(h)g(h),\quad &n=1,2,\\ \\
P_n(h)+P_{n-1}(h)\sqrt{h^2+h}+P_3(h)g(h),\quad &n\geq 3,\\
\end{array}
\right.
$$
where $ g(h)=\ln |2\sqrt{h^2+h}+2h+1|$, and $P_i(h)$, $\overline{P}_i(h)$, $ \widetilde{P}_i(h)$ are polynomials of degree no more than $i$.
  For $n\geq 3$, consider the $(n+1)$th order derivative of $M(h)$. For this purpose, we first notice that
$$
\left(\arctan \sqrt{h}\right)'=\frac{1}{2(1+h)\sqrt{h}}\quad {\rm{and}} \quad \left(\ln(2\sqrt{h^2+h}+2h+1) \right)'=\frac{1}{\sqrt{h}\sqrt{1+h}}.
$$
Then, according to
$$ \left(\frac{1}{\sqrt{h}}\right)^{(n)}=\frac{(-1)^n(2n-1)!!}{2^nh^{n+\frac{1}{2}}},$$
$$ \left(\frac{1}{\sqrt{1+h}}\right)^{(n)}=\frac{(-1)^n(2n-1)!!}{2^n(1+h)^{n+\frac{1}{2}}}, $$
and
$$\left(\frac{1}{1+h}\right)^{(n)}=\frac{(-1)^nn!}{(1+h)^{n+1}},$$
we have
$$\left(\arctan \sqrt{h}\right)^{(n+1)}=\frac{\widehat{P}_n(h)}{(1+h)^{n+1}h^{n+\frac{1}{2}}}$$
and
$$ \left(\ln(2\sqrt{h^2+h}+2h+1) \right)^{(n+1)}=\frac{\overline{P}_n(h)}{h^{n+\frac{1}{2}}(1+h)^{n+\frac{1}{2}}}, $$
where $\widehat{P}_n(h) $ and $ \overline{P}_n(h)$ are polynomials of degree no more than $n$.
Similarly, according to
$$
\left(\sqrt{h}\right)^{(n)}=\frac{(-1)^{n-1}(2n-3)!!}{2^nh^{n-\frac{1}{2}}}\quad {\rm{and}}\quad \left(\sqrt{1+h}\right)^{(n)}=\frac{(-1)^{n-1}(2n-3)!!}{2^n(1+h)^{n-\frac{1}{2}}},
$$
we obtain
$$ \left(\sqrt{h(1+h)}\right)^{(n)}=\frac{\widetilde{P}_n(h)}{h^{n-\frac{1}{2}}(1+h)^{n-\frac{1}{2}}},$$
where $ \widetilde{P}_n(h)$ is a polynomial of degree no more than $n$.
Based on the above results, we have for $n\geq 3$,
$$\aligned
\left(M(h)\right)^{(n+1)}&=\sum\limits_{i=0}^{n-1}C_{n+1}^iP_{n-1-i}(h)\left(\sqrt{h}\right)^{(n+1-i)}+\sum\limits_{i=0}^{n-2}C_{n+1}^iP_{n-2-i}(h)\left(\sqrt{h^2+h}\right)^{(n+1-i)}\\
&+\sum\limits_{i=0}^{n}C_{n+1}^iP_{n-i}(h)\left(\arctan \sqrt{h}\right)^{(n+1-i)}+\sum\limits_{i=0}^{2}C_{n+1}^iP_{2-i}(h)\left(g(h)\right)^{(n+1-i)}\\
&=\sum\limits_{i=0}^{n-1}C_{n+1}^iP_{n-1-i}(h)\frac{(-1)^{n-i}(2n-2i-1)!!}{2^{n+1-i}h^{n-i+\frac{1}{2}}}+\sum\limits_{i=0}^{n-2}C_{n+1}^iP_{n-2-i}(h)\frac{\widetilde{P}_{n+1-i}(h)}{(h^2+h)^{n-i+\frac{1}{2}}}\\
&+\sum\limits_{i=0}^{n}C_{n+1}^iP_{n-i}(h)\frac{\widehat{P}_{n-i}(h)}{(1+h)^{n+1-i}h^{n-i+\frac{1}{2}}}+\sum\limits_{i=0}^{2}C_{n+1}^iP_{2-i}(h)\frac{\overline{P}_{n-i}(h)}{(h^2+h)^{n-i+\frac{1}{2}}}\\
&=\frac{P_{n-1}(h)}{h^{n+\frac{1}{2}}}+\frac{P_{2n-1}(h)}{(h^2+h)^{n+\frac{1}{2}}}+\frac{P_{2n}(h)}{(1+h)^{n+1}h^{n+\frac{1}{2}}}+\frac{P_{n+2}(h)}{(h^2+h)^{n+\frac{1}{2}}}\\
&=\frac{(1+h)^{n+1}P_{n-1}(h)+\sqrt{1+h}P_{2n-1}(h)+P_{2n}(h)+\sqrt{1+h}P_{n+2}(h)}{(1+h)^{n+1}h^{n+\frac{1}{2}}}\\
&=\frac{\overline{P}_{2n}(h)+\sqrt{1+h}P_{2n-1}(h)}{(1+h)^{n+1}h^{n+\frac{1}{2}} }.\\
\endaligned
$$
Then, $\left(M(h)\right)^{(n+1)}=0  $ is equivalent to
$$\frac{\overline{P}_{2n}(h)+\sqrt{1+h}P_{2n-1}(h)}{(1+h)^{n+1}h^{n+\frac{1}{2}} }=0. $$
It yields that $\left(M(h)\right)^{(n+1)} $ has at most $4n$ zeros. Then, according to Theorem \ref{upperbound1}, $M(h)$ has at most $ 5n+1$ zeros on the interval $(0,+\infty)$ for $n\geq 3$, multiplicity taken into account. Similarly, we have for $n=1,2$,
$$ \left(M(h)\right)^{(3)}=\frac{P_4(h)+\overline{P}_3(h)\sqrt{1+h}}{h^{\frac{5}{2}}(1+h)^{3}}.$$
Thus, by Theorem \ref{upperbound1}, $M(h)$ has at most $ 11$ zeros on the interval $(0,+\infty)$ for $n=1,2$, multiplicity taken into account.

For $h\in (-\infty,-1)$, we have
$$\left(\ln(-2\sqrt{h^2+h}-2h-1)\right)^{(n+1)}=\frac{P_n(h)}{(h^2+h)^{n+\frac{1}{2}} }.$$
So, if $n\geq3$, then the $(n+1)$th order derivative of $M_0(h)(2h+1)$ is
$$ \left(M_0(h)(2h+1)\right)^{(n+1)}=\frac{P_{2n}(h)}{(h^2+h)^{n+\frac{1}{2}}}.$$
 Then, by Theorem \ref{upperbound1}, $ M_0(h)$ has at most $3n+1$ zeros on $(-\infty,-1)$ for $n\geq 3$, multiplicity taken into account. Similarly, we have for $n=1,2$,
$$ \left(M_0(h)(2h+1)\right)^{(4)}=\frac{P_{6}(h)}{(h^2+h)^{\frac{7}{2}}}.$$
Then, by Theorem \ref{upperbound1}, $ M_0(h)$ has at most $10$ zeros on $(-\infty,-1)$ for $n=1,2$, multiplicity taken into account.

From the above analysis and Theorem 4.4 in \cite{Han2021}, we can conclude that the number of limit cycles of system \eqref{ruh2} bifurcating from the period annulus $ \cup_{h\in(-\infty,-1)\cup (0, +\infty)}\overline{L}_h$ around the isochronous centers $(\pm1,0)$ is no more than $8n+2$ (counting multiplicity) for $n\geq3$; $21$ (counting multiplicity) for $n=1,2.$ The proof is ended.
\end{proof}

\begin{theorem}\label{ysecond}
For system \eqref{yruh2}, the number of limit cycles bifurcating from the period annulus $ \cup_{h\in(0,1)}\widehat{L}_h$ around the isochronous center $(1,0)$ is no more than $15n-11$ (counting multiplicity) for $n\geq 3$; 28 (counting multiplicity) for $n=1,2,$ by the first order Melnikov function.
\end{theorem}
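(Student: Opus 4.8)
The plan is to follow the strategy of the proof of Theorem~\ref{yfirst}: start from the explicit expression for $M(h)$ in the preceding lemma, reduce the counting of zeros of $M(h)$ on $(0,1)$ to that of an auxiliary function obtained after clearing the denominator, differentiate enough times to kill the transcendental pieces whose coefficients are polynomials, and then invoke Theorem~\ref{upperbound1} together with Theorem~4.4 in \cite{Han2021}. Since on $(0,1)$ the prefactor $(h-1)^{n-2}$ in \eqref{yruh2M1} (resp.\ $(h-1)$ in \eqref{yruh2M2}) never vanishes, I may replace $M(h)$ by $N(h):=(h-1)^{n-2}M(h)$ (resp.\ $(h-1)M(h)$) without changing the number of zeros, so it suffices to bound the zeros of $N(h)$.

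Substituting \eqref{g4i} and regrouping, for $n\ge 3$ I would write
\[
N(h)=A(h)+B(h)\sqrt{h}+C(h)\sqrt{1-h}+D(h)\sqrt{1-h}\,\arcsin\sqrt{h}+E(h)\ln\frac{1+\sqrt{h}}{1-\sqrt{h}}+F(h)\ln(1-h),
\]
where $A,\dots,F$ are polynomials whose degrees are read off from the bounds on $\alpha_k,\beta_k,\gamma_k,\delta_k$ and on $P_{3n-3}(\sqrt{h})$; in particular the two logarithmic coefficients $E,F$ have degree at most $n-1$, while the coefficient of the arcsine is $D(h)\sqrt{1-h}$ with $\deg D\le n-1$. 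The structural point is that $(\arcsin\sqrt{h})'=\frac{1}{2\sqrt{h}\sqrt{1-h}}$, $(\ln\frac{1+\sqrt{h}}{1-\sqrt{h}})'=\frac{1}{\sqrt{h}(1-h)}$ and $(\ln(1-h))'=\frac{-1}{1-h}$ are all algebraic. I would then differentiate $N$ exactly $n$ times: since $\deg E,\deg F\le n-1$, the Leibniz rule annihilates both logarithmic terms (turning them into rational functions), and the polynomial, $\sqrt{h}$ and $\sqrt{1-h}$ parts become rational-times-radical expressions. By contrast, because the arcsine carries the \emph{non-polynomial} coefficient $D\sqrt{1-h}$, differentiation only replaces that coefficient by $D_n(h)/(1-h)^{\,n-\frac{1}{2}}$ with $\deg D_n\le n-1$ and never removes the arcsine, so
\[
N^{(n)}(h)=a(h)\arcsin\sqrt{h}+b(h),\qquad a(h)=\frac{D_n(h)}{(1-h)^{\,n-\frac{1}{2}}},
\]
with $a,b$ algebraic. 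This surviving arcsine is the \textbf{main obstacle}: unlike the $\arctan\sqrt{h}$ of Theorem~\ref{yfirst}, it cannot be eliminated by differentiation alone, precisely because its coefficient is not a polynomial.

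To overcome it I would use the developed Horozov--Iliev method in its full strength, taking $G=a$. A direct computation gives
\[
\Bigl(\frac{N^{(n)}}{a}\Bigr)'=\frac{1}{2\sqrt{h}\sqrt{1-h}}+\Bigl(\frac{b}{a}\Bigr)'=\frac{\widetilde{F}}{a^{2}},
\]
where $\widetilde{F}=\frac{a^{2}}{2\sqrt{h}\sqrt{1-h}}+b'a-ba'$ is now purely algebraic (no arcsine). Applying Theorem~\ref{upperbound1} to $N^{(n)}$ with this $G$ and $m=1$ shows that $N^{(n)}$ has at most $\mu+p+1$ zeros on $(0,1)$, where $p$ is the number of zeros of $a$ (equal to the number of zeros of $D_n$, hence $p\le n-1$) and $\mu$ is the number of zeros of $\widetilde{F}$. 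Combining this with the elementary $p=0$ part of Theorem~\ref{upperbound1}, which passes from $N^{(n)}$ back to $N$ at the cost of $n$ extra zeros, I obtain
\[
\#\{M=0\}=\#\{N=0\}\le \#\{N^{(n)}=0\}+n\le (\mu+p+1)+n\le \mu+2n.
\]

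The remaining step is to bound $\mu$, the number of zeros of the algebraic function $\widetilde{F}$ on $(0,1)$. I would clear the (nonvanishing) denominators, collect $\widetilde{F}$ into the form $\text{poly}+\sqrt{h}\,\text{poly}+\sqrt{1-h}\,\text{poly}+\sqrt{h(1-h)}\,\text{poly}$, and rationalize by isolating the radicals and squaring (at most twice), reducing the count to the real roots of an explicit polynomial; careful bookkeeping of the degrees created by the $n$-fold differentiation and the squarings should give $\mu\le 13n-11$, whence $\#\{M=0\}\le 15n-11$ for $n\ge 3$. The case $n=1,2$ is handled identically from \eqref{yruh2M2}, now differentiating three times (the logarithmic coefficients having degree $\le 2$) and again removing the arcsine with $G=a$, which yields the bound $28$. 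Finally Theorem~4.4 in \cite{Han2021} converts these bounds on the zeros of $M(h)$ into the asserted bounds on the number of limit cycles. I expect the degree bookkeeping in this last rationalization step, rather than the conceptual arcsine elimination, to be the place where the exact constant $15n-11$ is most delicate to pin down.
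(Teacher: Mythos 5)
Your opening and closing moves (clearing the nonvanishing factor $(h-1)^{n-2}$, expanding via \eqref{g4i}, differentiating to remove the logarithms, and converting zero bounds into limit-cycle bounds through Theorem 4.4 of \cite{Han2021}) match the paper, and your diagnosis that the arcsine survives differentiation because its coefficient is not polynomial is correct. But your way around the arcsine diverges from the paper's, and that is exactly where the proof has a genuine gap: the bound $\mu\le 13n-11$ for the zeros of $\widetilde F=\frac{a^2}{2\sqrt{h(1-h)}}+b'a-ba'$ is asserted, not proved, and this single number carries the entire quantitative content of the theorem. Worse, a degree count indicates your route cannot deliver it. After $n$ differentiations, $b$ written over the common denominator $(1-h)^n h^{n-\frac{1}{2}}$ has numerator degrees of order $2n+[\frac{n}{2}]$ (the pure polynomial part of $N$ has degree up to $n-1+[\frac{n-1}{2}]$, so it survives, and the $P_{3n-3}(\sqrt h)$ block feeds in as well), while $a$ has numerator degree at most $n-1$; hence the Wronskian $b'a-ba'$ has numerator degrees of order $3n+[\frac{n}{2}]$. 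Since the numerator of $\widetilde F$ contains $1$, $\sqrt h$ and $\sqrt{h(1-h)}$, rationalizing costs two squarings, i.e.\ a factor of $4$, giving $\mu$ of order $14n$ and a final bound near $16n$ --- above $15n-11$. The loss comes from two sources the paper avoids: the product structure of the Wronskian, and the double squaring.

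The idea you are missing is the paper's intermediate multiplication. The paper first differentiates $M_1(h)=M(h)(h-1)^{n-2}$ (your $N$) not $n$ but $m=n+[\frac{n-1}{2}]$ times, killing the logarithms and the whole polynomial part; the arcsine survives with coefficient $\overline P^{*}_{n-1}(h)/(1-h)^{m-\frac{1}{2}}$. It then multiplies $\bigl(M_1\bigr)^{(m)}$ by $(1-h)^{m-\frac{1}{2}}h^{m-1}$, which is nonvanishing on $(0,1)$ and so changes no zero counts, and this turns the arcsine's coefficient into a \emph{polynomial} of degree at most $n+m-2$. A second round of $t=n+m-1$ differentiations then annihilates the arcsine outright, since $(\arcsin\sqrt h)^{(k)}$ is algebraic for $k\ge1$; the final expression involves only $\sqrt h$ (all half-integer powers of $1-h$ sit in the common denominator $[h(1-h)]^{t+\frac{1}{2}}$), so one squaring suffices. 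Only the trivial $p=0$ (Rolle-type) case of Theorem \ref{upperbound1} is ever used, all costs are additive ($+t$, then $+m$) rather than multiplicative, and the total $2n+3m+3t-4+2[\frac{n}{2}]$ evaluates to $15n-13$ ($n$ even) or $15n-11$ ($n$ odd); the case $n=1,2$ runs the same two-stage scheme (3 derivatives, clear the denominator, 5 derivatives) to get $28$. Your use of the nontrivial $G=a$ case of Theorem \ref{upperbound1} is legitimate in principle and would yield \emph{some} linear bound, but as designed it does not reach the stated constant, and without the explicit bookkeeping it does not prove the statement.
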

\begin{proof} According to \eqref{g4i} and \eqref{yruh2M1}, we have for $n\geq 3$,
$$\aligned
M(h)=&\frac{1}{(h-1)^{n-2}}[P_{n-1}(h)+\widetilde{P}_{n-1}(h)\sqrt{h}+\widehat{P}_{n-1}(h)\sqrt{1-h}\\
&+\overline{P}_{n-1}(h)\sqrt{1-h}\arcsin\sqrt{h}+\widetilde{\widetilde{P}}_{n-1}(h)\ln\frac{1+\sqrt{h}}{1-\sqrt{h}}\\
&+\overline{\overline{P}}_{n-1}(h)\ln(1-h)+P_{3n-3}(\sqrt{h})],\quad h\in(0,1),\\
\endaligned$$
where $P_i(u)$, $\widetilde{P}_i(u) $, $\widehat{P}_{i}(u) $, $\overline{P}_{i}(u) $, $ \widetilde{\widetilde{P}}_{i}(u)$, and $\overline{\overline{P}}_{i}(u)$ are polynomials of $u$ with degree $i$. Notice that
$$P_{3n-3}(\sqrt{h})=
\left\{\!\!\!
\begin{array}{ll}
P_{n-2+[\frac{n}{2}]}(h)\sqrt{h}+P_{n-2+[\frac{n}{2}]}(h),\quad &n=2k,\\ \\
P_{n-2+[\frac{n}{2}]}(h)\sqrt{h}+P_{n-1+[\frac{n}{2}]}(h),\quad &n=2k+1.\\
\end{array}
\right.
$$
Then, we have for $n\geq 3$,
$$\aligned
M(h)=&\frac{1}{(h-1)^{n-2}}[P_{m-1}(h)+P_{n-2+[\frac{n}{2}]}(h)\sqrt{h}+\widehat{P}_{n-1}(h)\sqrt{1-h}\\
&+\overline{P}_{n-1}(h)\sqrt{1-h}\arcsin\sqrt{h}+\widetilde{\widetilde{P}}_{n-1}(h)\ln\frac{1+\sqrt{h}}{1-\sqrt{h}}+\overline{\overline{P}}_{n-1}(h)\ln(1-h)],\\
\endaligned$$
where
\begin{equation}\label{m}
m=n+[\frac{n-1}{2}].
\end{equation}

Next, we will consider the $m$th order derivative of $M_1(h)$, where
$$M_1(h)=M(h)(h-1)^{n-2}.$$
For this purpose, we first state some facts.
$$\left(\arcsin \sqrt{h}\right)'=\frac{1}{2\sqrt{h}}\frac{1}{\sqrt{1-h}},$$
$$\left(\ln\frac{1+\sqrt{h}}{1-\sqrt{h}}\right)'=\frac{1}{1-h}\frac{1}{\sqrt{h}},$$
$$\left(\ln(1-h)\right)^{(n+1)}=-\frac{n!}{(1-h)^{n+1}},$$
$$\left(\arcsin\sqrt{h}\right)^{(n+1)}=\frac{P_n(h)}{[h(1-h)]^{n+\frac{1}{2}}},$$
$$\left(\sqrt{1-h}\arcsin \sqrt{h}\right)^{(n)}=\frac{P_{n-1}(h)}{(1-h)^{n-1}h^{n-\frac{1}{2}}}+\frac{-(2n-3)!!}{2^n(1-h)^{n-\frac{1}{2}}}\arcsin\sqrt{h},$$
and
$$\left(\ln\frac{1+\sqrt{h}}{1-\sqrt{h}}\right)^{(n+1)}=\frac{\overline{P}_{n}(h)}{(1-h)^{n+1}h^{n+\frac{1}{2}}}.$$
Then, the $m$th order derivative of $M_1(h)$ is as follows
\begin{equation}\label{y2M1}
\aligned
 \left(M_1(h)\right)^{(m)}&=\sum\limits_{i=0}^{n-2+[\frac{n}{2}]}C_m^iP_{n-2+[\frac{n}{2}]-i}(h)\left(\sqrt{h}\right)^{(m-i)}+\sum\limits_{i=0}^{n-1}
C_m^i\widehat{P}_{n-1-i}(h)\left(\sqrt{1-h}\right)^{(m-i)}\\
&+\sum\limits_{i=0}^{n-1}C_m^i\overline{P}_{n-1-i}(h)\left(\sqrt{1-h}\arcsin\sqrt{h}\right)^{(m-i)}
+\sum\limits_{i=0}^{n-1}C_m^i\widetilde{\widetilde{P}}_{n-1-i}(h)\left(\ln\frac{1+\sqrt{h}}{1-\sqrt{h}}\right)^{(m-i)}\\
&+\sum\limits_{i=0}^{n-1}C_m^i\overline{\overline{P}}_{n-1-i}(h)\left(\ln(1-h)\right)^{(m-i)}\\
&=\sum\limits_{i=0}^{n-2+[\frac{n}{2}]}\frac{(-1)^{m-i-1}C_m^i(2m-2i-3)!!P_{n-2+[\frac{n}{2}]-i}(h)}{2^{m-i}h^{m-i-\frac{1}{2}}}+\sum\limits_{i=0}^{n-1}
\frac{C_m^i(2m-2i-3)!!\widehat{P}_{n-1-i}(h)}{2^{m-i}(1-h)^{m-i-\frac{1}{2}}}\\
&+\sum\limits_{i=0}^{n-1}\frac{C_m^i\overline{P}_{n-1-i}(h)P_{m-1-i}(h)}{(1-h)^{m-i-1}h^{m-i-\frac{1}{2}}}+\sum\limits_{i=0}^{n-1}\frac{-(2m-2i-3)!!\overline{P}_{n-1-i}(h)}{2^{m-i}(1-h)^{m-i-\frac{1}{2}}}\arcsin{\sqrt{h}}\\
&+\sum\limits_{i=0}^{n-1}\frac{C_m^i \widetilde{\widetilde{P}}_{n-1-i}(h)P_{m-1-i}(h)}{(1-h)^{m-i}h^{m-i-\frac{1}{2}}}+\sum\limits_{i=0}^{n-1}\frac{-C_m^i(m-i-1)!\overline{\overline{P}}_{n-1-i}(h)}{(1-h)^{m-i}}\\
&=\frac{P_{n-2+[\frac{n}{2}]}(h)}{h^{m-\frac{1}{2}}}+\frac{\widehat{P}_{n-1}(h)}{(1-h)^{m-\frac{1}{2}}}+\frac{\overline{P}_{n-1}(h)P_{m-1}(h)}{(1-h)^{m-1}h^{m-\frac{1}{2}}}+\frac{\overline{P}^*_{n-1}(h)}{(1-h)^{m-\frac{1}{2}}}\arcsin\sqrt{h}\\
&+\frac{\widetilde{\widetilde{P}}_{n-1}(h)P_{m-1}(h)}{(1-h)^mh^{m-\frac{1}{2}}}+\frac{\overline{\overline{P}}_{n-1}(h)}{(1-h)^{m}}\\
&=\frac{P_{n+m-2}(h)+\frac{P_{n-2+[\frac{n}{2}]+m}(h)}{\sqrt{h}\sqrt{1-h}}+\frac{\widetilde{P}_{n+m-2}(h)}{\sqrt{1-h}}+\widehat{P}_{n-m+2}(h)\frac{\sqrt{1-h}}{\sqrt{h}}+\overline{P}_{n+m-2}(h)\arcsin\sqrt{h}}{(1-h)^{m-\frac{1}{2}}h^{m-1}}.
\endaligned
\end{equation}

Now, we consider the $(n+m-1)$th order derivative of $M_2(h)$, where
$$M_2(h)=\left(M_1(h)\right)^{(m)}(1-h)^{m-\frac{1}{2}}h^{m-1}.$$
 Notice that
$$\left(\frac{1}{\sqrt{h}\sqrt{1-h}}\right)^{(n)}=\frac{P_n(h)}{[h(1-h)]^{n+\frac{1}{2}}}$$
and
$$\left(\frac{\sqrt{1-h}}{\sqrt{h}}\right)^{(n)}=\frac{\overline{\overline{P}}_n(h)}{h^{n+\frac{1}{2}}(1-h)^{n-\frac{1}{2}}}.$$
Set $t=n+m-1$. Then, we have
$$
\aligned
 \left(M_2(h)\right)^{(t)}&=\sum\limits_{i=0}^{t}C_t^iP_{n+m-2+[\frac{n}{2}]-i}(h)\left(\frac{1}{\sqrt{1-h}\sqrt{h}}\right)^{(t-i)}+\sum\limits_{i=0}^{n+m-2}
C_t^i\widetilde{P}_{n+m-2-i}(h)\left(\frac{1}{\sqrt{1-h}}\right)^{(t-i)}\\
&+\sum\limits_{i=0}^{n+m-2}
C_t^i \widehat{P}_{n+m-2-i}(h)\left(\frac{\sqrt{1-h}}{\sqrt{h}}\right)^{(t-i)}+\sum\limits_{i=0}^{n+m-2}
C_t^i \overline{P}_{n+m-2-i}(h)\left(\arcsin\sqrt{h}\right)^{(t-i)}\\
&=\frac{\sum\limits_{i=0}^{t}C_t^iP_{n+m-2+[\frac{n}{2}]-i}(h)P_{t-i}(h)}{[h(1-h)]^{t-i+\frac{1}{2}}}+\frac{\sum\limits_{i=0}^{n+m-2}
-C_t^i(2t-2i-1)!!\widetilde{P}_{n+m-2-i}(h)}{2^{t-i}(1-h)^{t-i+\frac{1}{2}}} \\
&+\frac{\sum\limits_{i=0}^{n+m-2}
-C_t^i(2t-2i-1)!!\widehat{P}_{n+m-2-i}(h)\overline{\overline{P}}_{t-i}(h)}{h^{t-i+\frac{1}{2}}(1-h)^{t-i-\frac{1}{2}}}+\frac{\sum\limits_{i=0}^{n+m-2}
C_t^i\overline{P}_{n+m-2-i}(h)P_{t-1-i}(h)}{[h(1-h)]^{t-i-\frac{1}{2}}}\\
\endaligned
$$
$$
\aligned
&=\frac{P_{n+m-2+[\frac{n}{2}]}(h)P_t(h)}{[h(1-h)]^{t+\frac{1}{2}}}+\frac{\widetilde{P}_{n+m-2}(h)}{(1-h)^{t+\frac{1}{2}}}+\frac{\widehat{P}_{n+m-2}(h)\overline{\overline{P}}_t(h)}{h^{t+\frac{1}{2}}(1-h)^{t-\frac{1}{2}}}+\frac{\overline{P}_{n+m-2}(h)P_{t-1}(h)}{[h(1-h)]^{t-\frac{1}{2}}}\\
&=\frac{P_{n+m+t-2+[\frac{n}{2}]}(h)+P_{n+m+t-2}(h)\sqrt{h}}{[h(1-h)]^{t+\frac{1}{2}}}.
\endaligned
$$
Then, $\left(M_2(h)\right)^{(t)}=0$ is equivalent to
$$P_{n+m+t-2+[\frac{n}{2}]}(h)+P_{n+m+t-2}(h)\sqrt{h}=0. $$
Thus, the number of zeros of $\left(M_2(h)\right)^{(t)}$ is at most $ 2n+2m+2t-4+2[\frac{n}{2}]$. By Theorem \ref{upperbound1}, the number of zeros of $M_2(h)$ is at most $ 2n+2m+3t-4+2[\frac{n}{2}]$, multiplicity taken into account. By \eqref{y2M1} and Theorem \ref{upperbound1}, the number of zeros of $M_1(h)$ is at most $ 2n+3m+3t-4+2[\frac{n}{2}]$, multiplicity taken into account. Thus, by \eqref{m} and $t=n+m-1$, the number of zeros of $M(h)$ is at most $15n-13$ ($15n-11$) if $n\geq 3$ is even (odd), multiplicity taken into account. The first conclusion follows.

By \eqref{g4i} and \eqref{yruh2M2}, we have $M(h)=\frac{M_3(h)}{h-1}$ for $n=1,2,$ where
$$\aligned
M_3(h)&=P_2(h)+\widetilde{P}_2(h)\sqrt{h}+\widehat{P}_2(h)\sqrt{1-h}+\overline{P}_2(h)\sqrt{1-h}\arcsin\sqrt{h}\\
&+\overline{\overline{P}}_2(h)\ln\frac{1+\sqrt{h}}{1-\sqrt{h}}+\widetilde{\widetilde{P}}_2(h)\ln(1-h).\\
\endaligned$$
Then, the $3$th order derivative of $M_3(h)$ is as follows
\begin{equation}\label{yM3}
\aligned
\left(M_3(h)\right)^{(3)}&=\sum\limits_{i=0}^2C_3^i\widetilde{P}_{2-i}(h)\left(\sqrt{h}\right)^{(3-i)}+\sum\limits_{i=0}^2C_3^i\widehat{P}_{2-i}(h)\left(\sqrt{1-h}\right)^{(3-i)}\\
&+\sum\limits_{i=0}^2C_3^i\overline{\overline{P}}_{2-i}(h)\left(\ln\frac{1+\sqrt{h}}{1-\sqrt{h}}\right)^{(3-i)}+\sum\limits_{i=0}^2C_3^i\overline{P}_{2-i}(h)\left(\sqrt{1-h}\arcsin \sqrt{h}\right)^{(3-i)}\\
&+\sum\limits_{i=0}^2C_3^i\widetilde{\widetilde{P}}_{2-i}(h)\left(\ln(1-h)\right)^{(3-i)}\\
\endaligned
\end{equation}
$$
\aligned
&=\frac{\sum\limits_{i=0}^2(-1)^{2-i}C_3^i(3-2i)!!\widetilde{P}_{2-i}(h)}{2^{3-i}h^{\frac{5}{2}-i}}+\frac{\sum\limits_{i=0}^2(-1)C_3^i(3-2i)!!\widehat{P}_{2-i}(h)}{2^{3-i}(1-h)^{\frac{5}{2}-i}}\\
&+\frac{\sum\limits_{i=0}^2C_3^i\overline{\overline{P}}_{2-i}(h)P_{2-i}(h)}{(1-h)^{3-i}h^{\frac{5}{2}-i}}
+\frac{\sum\limits_{i=0}^2C_3^i\overline{P}_{2-i}(h)P_{2-i}(h)}{(1-h)^{2-i}h^{\frac{5}{2}-i}}\\
&+\frac{\sum\limits_{i=0}^2(-1)C_3^i(3-2i)!!\overline{P}_{2-i}(h)}{2^{3-i}(1-h)^{\frac{5}{2}-i}}\arcsin\sqrt{h}
+\frac{\sum\limits_{i=0}^2(-1)(2-i)!C_3^i\widetilde{\widetilde{P}}_{2-i}(h) }{(1-h)^{3-i}}\\
&=\frac{\widetilde{P}_2(h)}{h^{\frac{5}{2}}}+\frac{\widehat{P}_2(h)}{(1-h)^{\frac{5}{2}}}+\frac{\overline{P}_2(h)P_2(h)}{(1-h)^2h^{\frac{5}{2}}}+\frac{\overline{P}^*_2(h)}{(1-h)^{\frac{5}{2}}}\arcsin\sqrt{h}+\frac{\overline{\overline{P}}_2(h)P_2(h)}{(1-h)^3h^{\frac{5}{2}}}+\frac{\widetilde{\widetilde{P}}_2(h)}{(1-h)^3}\\
&=\frac{P_4(h)+\widehat{P}_4(h)\frac{\sqrt{1-h}}{\sqrt{h}}+\frac{\overline{P}_4(h)}{\sqrt{h}\sqrt{1-h}}+\frac{\overline{\overline{P}}_4(h)}{\sqrt{1-h}}+\widetilde{\widetilde{P}}_4(h)\arcsin\sqrt{h}}{h^2(1-h)^{\frac{5}{2}}}.\\
\endaligned
$$
Let
$$M_4(h)=P_4(h)+\widehat{P}_4(h)\frac{\sqrt{1-h}}{\sqrt{h}}+\frac{\overline{P}_4(h)}{\sqrt{h}\sqrt{1-h}}+\frac{\overline{\overline{P}}_4(h)}{\sqrt{1-h}}+\widetilde{\widetilde{P}}_4(h)\arcsin\sqrt{h}.$$
Notice that
$$\left(\frac{\sqrt{1-h}}{\sqrt{h}}\right)^{(n)}=\frac{P_n(h)}{h^{n+\frac{1}{2}}(1-h)^{n-\frac{1}{2}}}.$$
Then, we have
$$
\aligned
\left(M_4(h)\right)^{(5)}&=\sum\limits_{i=0}^4C_5^i\widehat{P}_{4-i}(h)\left(\frac{\sqrt{1-h}}{\sqrt{h}}\right)^{(5-i)}+\sum\limits_{i=0}^4C_5^i\overline{P}_{4-i}(h)\left(\frac{1}{\sqrt{1-h}\sqrt{h}}\right)^{(5-i)}\\
&+\sum\limits_{i=0}^4C_5^i\overline{\overline{P}}_{4-i}(h)\left(\frac{1}{\sqrt{1-h}}\right)^{(5-i)}+\sum\limits_{i=0}^4C_5^i\widetilde{\widetilde{P}}_{4-i}(h)\left(\arcsin\sqrt{h}\right)^{(5-i)}\\
&=\frac{\sum\limits_{i=0}^4C_5^i\widehat{P}_{4-i}(h)P_{5-i}(h)}{h^{\frac{11}{2}-i}(1-h)^{\frac{9}{2}-i}}+\frac{\sum\limits_{i=0}^4C_5^i\overline{P}_{4-i}(h)P^*_{5-i}(h)}{h^{\frac{11}{2}-i}(1-h)^{\frac{11}{2}-i}}\\
&+\frac{\sum\limits_{i=0}^4C_5^i(10-2i-1)!!\overline{\overline{P}}_{4-i}(h)}{2^{5-i}(1-h)^{\frac{11}{2}-i}}+\frac{\sum\limits_{i=0}^4C_5^i\widetilde{\widetilde{P}}_{4-i}(h)P_{4-i}(h)}{h^{\frac{9}{2}-i}(1-h)^{\frac{9}{2}-i}}\\
&=\frac{\widehat{P}_4(h)P_5(h)}{h^{\frac{11}{2}}(1-h)^{\frac{9}{2}}}+\frac{\overline{P}_4(h)P^*_5(h)}{h^{\frac{11}{2}}(1-h)^{\frac{11}{2}}}+\frac{\overline{\overline{P}}_4(h)}{(1-h)^{\frac{11}{2}}}+\frac{\widetilde{\widetilde{P}}_4(h)P_4(h)}{h^{\frac{9}{2}}(1-h)^{\frac{9}{2}}}\\
&=\frac{P_{10}(h)+P_9(h)\sqrt{h}}{h^{\frac{11}{2}}(1-h)^{\frac{11}{2}}}.
\endaligned
$$
Obviously, $\left(M_4(h)\right)^{(5)}=0$ is equivalent to
$$P_{10}(h)+P_9(h)\sqrt{h}=0. $$
Then, $\left(M_4(h)\right)^{(5)}$ has at most $20$ zeros. By Theorem \ref{upperbound1}, $M_4(h)$ has at most $25$ zeros, multiplicity taken into account. Thus, by \eqref{yM3} and Theorem \ref{upperbound1}, $M_3(h)$ has at most $28$ zeros, multiplicity taken into account. It implies that $M(h)$ has at most $28$ zeros for $n=1,2$, multiplicity taken into account.

From the above analysis and Theorem 4.4 in \cite{Han2021}, we can conclude that the number of limit cycles of system \eqref{yruh2} bifurcating from the period annulus $ \cup_{h\in(0,1)}\widehat{L}_h$ around the isochronous centers $(1,0)$ is no more than $ 15n-11$ (counting multiplicity) for $n\geq3$; $28$ (counting multiplicity) for $n=1,2.$ The proof is ended.
\end{proof}
\begin{remark}
Theorems \ref{yfirst} and \ref{ysecond} improve conclusions $(i)$ and $(ii)$ of Theorem \ref{301}, respectively.
\end{remark}
\section{Conclusion}\label{section5}
According to the proofs of Theorems \ref{GHIR1}, \ref{yfirst}, and \ref{ysecond}, it is easy to see that we can get rid of logarithm function, arc sine function and arc tangent function by Theorem \ref{upperbound1}. If the first order Melnikov function $M(h)$ is a linear combination of power functions and some other elementary functions, such as logarithm function or inverses of trigonometric functions, one can try to use Theorem \ref{upperbound1} to get rid of these elementary functions to find an upper bound of the number of zeros of $M(h)$. In fact, the authors in \cite{Cen2018,Chen2020,Xiong2012} used this idea to get rid of some logarithm functions to estimate upper bound of the number of zeros of the first order Melnikov function.


\Acknowledgements{This work was supported by National Natural Science Foundation of China (Grant No.11931016 and 11771296) and Hunan Provincial Education Department (Grant No.19C1898).}





\end{document}